\providecommand{\U}[1]{\protect\rule{.1in}{.1in}}
\newtheorem{theorem}{Theorem}[section]
\newtheorem{lemma}{Lemma}[section]
\newtheorem{corollary}{Corollary}[section]
\theoremstyle{definition}
\newtheorem{definition}{Definition}[section]
\theoremstyle{statement}
\newtheorem{statement}{Statement}[section]
\theoremstyle{remark}
\numberwithin{equation}{section}
\begin{document}
	\begin{frontmatter}
		
		\title{On solvability in the small of higher order elliptic equations in Orlicz-Sobolev spaces}
		
		
		
		\author[]{Javad A. Asadzade}
		\ead{javad.asadzade@emu.edu.tr}

		\address{Department of Mathematics, Eastern Mediterranean University, Mersin 10, 99628, T.R. North Cyprus, Turkey}
		
		

		\begin{abstract}
			\noindent
			
		In this article,  we consider a higher-order elliptic equation with nonsmooth coefficients with respect to Orlicz spaces on the domain $\Omega\subset\mathbb{R}^{n}$. Separable subspace of this space is distinguished in which infinitely differentiable and compactly supported functions are dense; Sobolev spaces generated by these subspaces are determined. We demonstrate the local solvability of the equation in Orlicz-Sobolev spaces under specific restrictions on the coefficients of the equation and the Boyd indices of the Orlicz space. This result strengthens the previously known classical $L_{p}$ analog. 
			
		\end{abstract}
	
			\begin{keyword}					
			elliptic equation;  Orlicz-Sobolev spaces;  solvability in the small
		\end{keyword}
	\end{frontmatter}

\textbf{Mathematics Subject Classification.} {35A01, 35J05, 35K05}

	\section{INTRODUCTION}
The main issue in the theory of differential operators has always been the solvability of differential equations. Different concepts of solution have been introduced, such as classical solution, strong solution, weak solution, etc., to address these problems. It should be noted that the solvability of a differential equation, in some form or another, depends on the coefficients of the equation, the domain in which the problem is considered, and the space in which the solution is sought. Many mathematicians have written significant monographs on these problems, providing valuable insights (see, e.g. \cite{1},\cite{2},\cite{8}-\cite{12}). Solvability problems for second-order elliptic equations in Hölder, Sobolev, and other classical spaces have been extensively studied in various monographs (see, e.g. \cite{2},\cite{9},\cite{11},\cite{12}). These problems are still relevant and new results are continually being obtained for new spaces ( see, e.g. \cite{39}).

In recent times, there has been a growing interest in exploring non-standard function spaces in the context of various problems in mechanics and mathematical physics. Examples of these spaces include Lebesgue spaces with variable summability index, Morrey spaces(\cite{13},\cite{15},\cite{24},\cite{36},\cite{38}), Orlicz spaces(\cite{6},\cite{7}, \cite{14},\cite{18},\cite{21}-\cite{23},\cite{30},\cite{33},\cite{41}), and grand-Lebesgue spaces(\cite{3},\cite{16}-\cite{19},\cite{28}). There has been significant research and progress in the analysis and approximation theory within these function spaces, with notable advancements in Lebesgue spaces with variable summability index, Morrey spaces, and grand-Lebesgue spaces. Notable breakthroughs have been made in the field of harmonic analysis and approximation within grand Lebesgue spaces(\cite{27}). Furthermore, there has been a emergence of studies on solvability problems related to (partial) differential equations within non-standard function spaces(\cite{39}).

For example:  Bilalov and Sadigova (\cite{16}) presented the main results for solvability in the small of higher order elliptic equations in the grand-Sobolev space. Subsequently, Bilalov, Zeren, Sadigova, and Cetin further strengthened this research by obtaining Schauder-type estimates for higher order elliptic equations in grand Sobolev spaces (\cite{3}). Furthermore, Bilalov and Sadigova (\cite{17}) derived interior Schauder-type estimates for higher order elliptic operators in grand-Sobolev spaces. (For more comprehensive information, see \cite{3}, \cite{15}-\cite{19}, \cite{29}). So, the Orlicz-Sobolev analog of these results is of interest in mathematical research.

Elliptic equations with nonsmooth coefficients have been extensively studied in the field of partial differential equations. These equations arise in various applications, such as fluid dynamics, solid mechanics, and mathematical physics. The behavior of solutions to such equations can be complex and challenging to analyze due to the lack of smoothness in the coefficients.

In this article, we consider a higher-order elliptic equation with nonsmooth coefficients in the context of Orlicz spaces. Orlicz spaces are function spaces that capture the properties of both Lebesgue and Sobolev spaces and are well-suited for the study of equations with nonsmooth coefficients.

The main focus of our study is to investigate the solvability of the equation in Orlicz-Sobolev spaces. These spaces are generated by separable subspaces of Orlicz spaces wherein infinitely differentiable and compactly supported functions are dense. By characterizing the properties of these spaces, we can determine the solvability of the equation in a local sense.

To establish the solvability result, we impose certain conditions on the coefficients of the equation and the Boyd indices of the Orlicz space. These conditions ensure the well-posedness of the problem and guarantee the existence of a local solution in the Orlicz-Sobolev spaces. This result extends and strengthens the previously known classical  $L_{p}$ analog, where the smoothness of the coefficients is assumed.

The study of elliptic equations with nonsmooth coefficients in Orlicz spaces is of significant importance in the field of partial differential equations. It provides a more general framework for understanding and analyzing the behavior of solutions to such equations, allowing for the consideration of a wider class of coefficients. Moreover, the study of Orlicz-Sobolev spaces and their properties contributes to the development of mathematical tools and techniques for the analysis of PDEs in non-smooth settings.

In summary, this article contributes to the understanding of elliptic equations with nonsmooth coefficients in the context of Orlicz spaces. It establishes the local solvability of the equation in Orlicz-Sobolev spaces, extending the previously known results in the classical $L_{p}$ setting. This study opens up new avenues for further research in the field of partial differential equations and provides valuable insights for the analysis of equations with nonsmooth coefficients.

	\section{PRELIMINARIES}	
	We will use the following standart notations. $\mathbb{Z}_{+}$ represents the set of of non-negative integers. $\mathbb{B}_{r}(x_{0})=\lbrace x\in \mathbb{R}^{n}: \vert x-x_{0}\vert<r\rbrace$ will mention the open ball in $\mathbb{R}^{n}$ centred at $x_{0}$, where $\vert x\vert=\sqrt{x^{2}_{1}+\cdots+x^{2}_{n}}, \quad x=(x_{1},\cdots,x_{n}). \quad \Omega_{r}(x_{0})=\Omega\cap \mathbb{B}_{r}(x_{0}), \quad \mathbb{B}_{r}=\mathbb{B}_{r}(0), \quad \Omega_{r}=\Omega_{r}(0).$ $mes(M)$ will stand for the Lebesgue measure of the set $M$, $\partial\Omega$ will be the boundary of the domain $\Omega$, $\bar{\Omega}=\Omega\cup \partial\Omega$,$M_{1}\Delta M_{2}$ will denote the symmetric difference between the sets $M_{1}$ and $M_{2}$, $diam(\Omega)$ will denote for the diameter of the set $\Omega$, $\rho(x,M)$ will be the distance between $X$ and the set $M$, and $\Vert T\Vert_{[X,Y]}$ will mention the norm of the operator $T$, acting boundedly from $X$ to $Y$.
	\subsection{\textbf{Elliptic operator of mth order}}
	Assume $\Omega\subset \mathbb{R}^{n}$ be some bounded domain with the refictable boundary $\partial\Omega$. We will use the notation of (\cite{2}). $\alpha=(\alpha_{1},\cdots,\alpha_{n})$ will be the multiindex with the coordinates $\alpha_{k}\in Z_{+}, \forall k=\overline{1,n}$, $\partial_{i}=\frac{\partial}{x_{i}}$ will mention the differentiation operator, $\partial^{\alpha}=\partial^{\alpha_{1}}_{1}\partial^{\alpha_{2}}_{2}\cdots \partial^{\alpha_{n}}_{n}$. For all $\eta=(\eta_{1},\cdots,\eta_{n})$ we suppose $\eta^{\alpha}=\eta^{\alpha_{1}}_{1}\eta^{\alpha_{2}}_{2}\cdots \eta^{\alpha_{n}}_{n}$. Assume $L$ become an elliptic differential operator of mth order
	\begin{align}
		\mathrm{L} =&\sum_{\vert p\vert \leq m} a_{p}(x)\partial ^{p}, 
	\end{align}
where $p=(p_{1},\cdots,p_{n}), \quad p_{k}\in Z_{+},\quad \forall k=\overline{1,n}, \quad \vert p\vert =\sum_{k=1}^{n} p_{k},\quad a_{p}(\cdot)\in L_{\infty}(\Omega)$ are real functions, i.e. the characteristic form
\begin{align}
	Q(x,\eta)=\sum_{\vert p\vert = m} a_{p}(x)\eta^{p},
\end{align}
is defined at every point $x\in\Omega$. It is known that in this case $m$ is even. Let $m=2m^{\prime}$, and assume without loss of generality that
\begin{align*}
	(-1)^{m^{\prime}}Q(x,\eta)>0, \quad \forall \eta\not =0, \quad \forall x\in \Omega.
\end{align*}
Consider the elliptic operator $L_{0}$:
	\begin{align}
		\mathrm{L} _{0}=&\sum_{\vert p\vert = m} a^{0}_{p}\partial ^{p}, 
	\end{align}
	with the constant coefficients $a^{0}_{p}$.
	In what follows, by solution of the equation $Lu=f$ we mean a strong solution (see, \cite{2}). We will need the following clasical result of \cite{2}.
	\begin{theorem}(\cite{2})
		
		For any mth order elliptic operator $L_{0}$ of the form (2.3) with the constant coefficients, the function $\mathbb{J}(x)$ can be constructed which has the below properties:
		
		1). If n is odd  or n is even and $n>m$, hence
		\begin{align*}
			\mathbb{J}(x)=\frac{\omega(x)}{\vert x\vert ^{n-m}},
		\end{align*}
	where $\omega(x)$ is a positive homogeneous function of degree zero ($\omega(tx)=\omega(x),\quad \forall t>0$)
	
	If $n$ is even and $n\leq m$, hence
	\begin{align*}
		\mathbb{J}(x)=q(x)\log\vert x\vert+\frac{\omega(x)}{\vert x\vert ^{n-m}},
	\end{align*}
	where $q$ is homogeneous polynomial of degree $m-n$.

	2). The function $\mathbb{J}(x)$ satisfies (in a generalized sense) the equation
	\begin{align*}
		L_{0}\mathbb{J}(x)=\delta(x),
	\end{align*}
where, $\delta$ is a Dirac function, so the following equality is true for every infinitely differentiable function $\varphi(\cdot)$ with compact support
\begin{align*}
	\varphi(x)=\int [L_{0}\varphi(y)]\mathbb{J}(x-y)dy=L_{0} \int \varphi(y)\mathbb{J}(x-y)dy.
\end{align*}
	\end{theorem}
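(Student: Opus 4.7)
The plan is Fourier-analytic. Applying the Fourier transform to $L_0 \mathbb{J} = \delta$ formally yields $i^m Q(\eta)\,\hat{\mathbb{J}}(\eta) = 1$, so the natural candidate is
\[ \mathbb{J}(x) = \frac{i^{-m}}{(2\pi)^n} \int_{\mathbb{R}^n} \frac{e^{i x\cdot\eta}}{Q(\eta)}\, d\eta. \]
Ellipticity guarantees $Q(\eta)\neq 0$ for $\eta\neq 0$, so $1/Q$ is $C^\infty$ off the origin and positively homogeneous of degree $-m$. The entire task reduces to making sense of this integral as a tempered distribution and extracting the explicit geometric form claimed in the theorem.

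First I would realize $1/Q(\eta)$ as a tempered distribution. If $m<n$ it is locally integrable near the origin and no regularization is needed; if $m\ge n$ I would use Hadamard's finite-part construction, equivalently the analytic continuation of the family $Q(\eta)^{-1}|\eta|^{-s}$ to $s=0$. In either case the resulting distribution remains homogeneous of degree $-m$. By the classical theorem on Fourier transforms of homogeneous tempered distributions, $\mathbb{J}$ is then homogeneous of degree $m-n$, and a logarithmic correction appears precisely when $m-n$ is a non-negative even integer --- which, since $m$ is even, occurs exactly in the announced case $n$ even and $n\le m$. Passing to polar coordinates $\eta=r\xi$ and using the analytically continued identity $\int_0^\infty r^{z-1}e^{irs}\,dr = \Gamma(z)(-is)^{-z}$, I would express $\mathbb{J}(x)$ for $x\neq 0$ as a sphere integral
\[ \mathbb{J}(x) = \int_{S^{n-1}} \frac{\Phi_{n,m}(x\cdot\xi)}{Q(\xi)}\, d\xi, \]
where $\Phi_{n,m}(s)$ is a constant multiple of $|s|^{m-n}$ in the generic case and of $s^{m-n}\log|s|$ in the exceptional case. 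Evaluating the sphere integral yields the asserted form $\omega(x)/|x|^{n-m}$, with $\omega$ depending only on $x/|x|$ and hence positively homogeneous of degree zero, together with the additional summand $q(x)\log|x|$ in the exceptional case, where $q$ is a homogeneous polynomial of degree $m-n$.

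Property (2) is then verified by testing against $\varphi\in C_c^\infty$: the tempered identity $i^m Q(\eta)\hat{\mathbb{J}}(\eta)=1$ combined with Parseval gives
\[ \int [L_0 \varphi(y)]\,\mathbb{J}(x-y)\,dy = (L_0\varphi * \mathbb{J})(x) = \varphi(x), \]
which is the required distributional identity. The main obstacle is the exceptional case $n$ even with $n\le m$: here the formal integral is divergent, and the logarithm must be extracted as the residue of the analytic family $Q(\eta)^{-1}|\eta|^{-s}$ at $s=0$, with the polynomial $q(x)$ emerging as the degree-$(m-n)$ coefficient of the resulting Laurent expansion. Justifying this residue extraction and converting from the distributional formula to the explicit pointwise representation is the most delicate technical step.
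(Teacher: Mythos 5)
The paper offers no proof of this theorem: it is quoted from Bers--John--Schechter \cite{2}, where the fundamental solution is built by John's method of plane waves, i.e.\ $\mathbb{J}$ is written from the outset as an integral over the unit sphere of one-dimensional kernels in $x\cdot\xi$ divided by $Q(\xi)$; that construction gives the explicit forms $\omega(x)/|x|^{n-m}$ and $q(x)\log|x|+\omega(x)/|x|^{n-m}$ pointwise, with no distributional regularization, and yields at once the derivative bounds $|\partial^{p}\mathbb{J}(x)|\le \mathcal{C}|x|^{m-n-|p|}$ that the present paper uses later in the Main Lemma. Your Fourier route (tempered realization of $1/Q$ by analytic continuation of $Q(\eta)^{-1}|\eta|^{-s}$, then inversion) is a genuinely different and standard alternative; after polar coordinates it arrives at essentially the same sphere-integral representation, and it verifies $L_{0}\mathbb{J}=\delta$ cleanly via $i^{m}Q\hat{\mathbb{J}}=1$ and Parseval, which is its main advantage in economy. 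Three points need tightening in your write-up. First, the assertion that the regularized $1/Q$ ``remains homogeneous of degree $-m$'' is false precisely in the exceptional case $n$ even, $n\le m$: the finite part is only associated homogeneous, its scaling law acquiring terms supported at the origin, and this failure of exact homogeneity is the very mechanism producing $q(x)\log|x|$ (an exactly homogeneous regularization would have an exactly homogeneous Fourier transform and no logarithm). Second, you should record the parity argument that for $n$ odd (and $m\ge n$) the residue at $s=0$ vanishes, because $\int_{S^{n-1}}\xi^{\alpha}Q(\xi)^{-1}d\sigma=0$ when $|\alpha|=m-n$ is odd and $Q$ is even; this is what confines the logarithm to the announced case. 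Third, the regularization is canonical only modulo distributions supported at the origin of order $m-n$, so $\mathbb{J}$ is determined modulo a homogeneous polynomial of degree $m-n$; one should note that such a polynomial is absorbable into $\omega(x)/|x|^{n-m}$ with $\omega$ homogeneous of degree zero, so the asserted form is independent of the choice, and that $Q\cdot\mathrm{f.p.}(1/Q)=1$ holds without correction because $Q\varphi$ vanishes at the origin to order $m>m-n$. With these repairs your argument is sound, though the plane-wave construction of \cite{2} remains the more elementary path to the explicit representation and the pointwise estimates this paper actually needs.
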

Let's examine the elliptic operator (2.1) and define a "tangential operator" for it.
\begin{align}
	L_{x_{0}}=\sum_{\vert p\vert = m} a_{p}(x_{0}) \partial ^{p},
\end{align}
to it at every point $x_{0}\in \Omega$. The term "parametrics" is used to refer to the function $\mathbb{J}_{x_{0}}(\cdot)$, which is the fundamental solution of the equation $L_{x_{0}}\varphi=0$ according to Theorem 2.1. This function is specifically associated with the equation $L\varphi=0$ with a singularity at the point $x_{0}$. Assume
\begin{align*}
	&\mathcal{S}_{x_{0}}\varphi=\Psi(x)=\int \mathbb{J}_{x_{0}}(x-y)\varphi(y)dy,
\end{align*}
and
\begin{align}
	\mathbb{T}_{x_{0}}=\mathcal{S}_{x_{0}}(L_{x_{0}}-L).
\end{align}
The following lemma is crucial in proving the existence of a solution to the equation $Lu=f$.
\begin{lemma}(\cite{2})
	If $\varphi$ has compact support, then 
	\begin{align*}
		\varphi=\mathbb{T}_{x_{0}}\varphi+\mathcal{S}_{x_{0}}L\varphi,
	\end{align*}
and \quad if
\begin{align*}
	\varphi=\mathbb{T}_{x_{0}}\varphi+\mathcal{S}_{x_{0}}f,
\end{align*}
then $L\varphi=f$.
\end{lemma}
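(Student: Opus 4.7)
The plan is to reduce both implications to the single identity $\mathcal{S}_{x_0} L_{x_0}\varphi = \varphi$ for compactly supported $\varphi$, which is exactly the representation formula furnished by part 2) of Theorem 2.1. Once that identity is in hand, the rest is just algebraic manipulation using the definition $\mathbb{T}_{x_0} = \mathcal{S}_{x_0}(L_{x_0}-L)$.

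For the first assertion, I would start from the parametric representation
\[
\varphi(x) \;=\; \int \mathbb{J}_{x_0}(x-y)\,L_{x_0}\varphi(y)\,dy \;=\; \mathcal{S}_{x_0} L_{x_0}\varphi(x),
\]
which is valid for smooth compactly supported $\varphi$ because $\mathbb{J}_{x_0}$ is a fundamental solution of $L_{x_0}$. Writing $L_{x_0} = (L_{x_0}-L) + L$ and using the linearity of $\mathcal{S}_{x_0}$, one immediately gets
\[
\varphi \;=\; \mathcal{S}_{x_0}(L_{x_0}-L)\varphi + \mathcal{S}_{x_0} L\varphi \;=\; \mathbb{T}_{x_0}\varphi + \mathcal{S}_{x_0} L\varphi,
\]
which is the first claim.

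For the converse, assume $\varphi = \mathbb{T}_{x_0}\varphi + \mathcal{S}_{x_0}f$. Apply the constant-coefficient operator $L_{x_0}$ to both sides. On the right-hand side, since $L_{x_0}\mathcal{S}_{x_0} g = g$ for any admissible $g$ (again by Theorem~2.1), the term $L_{x_0}\mathcal{S}_{x_0}f$ yields $f$ and the term $L_{x_0}\mathbb{T}_{x_0}\varphi = L_{x_0}\mathcal{S}_{x_0}(L_{x_0}-L)\varphi$ yields $(L_{x_0}-L)\varphi$. On the left we simply get $L_{x_0}\varphi$. Thus
\[
L_{x_0}\varphi \;=\; (L_{x_0}-L)\varphi + f,
\]
and subtracting $(L_{x_0}-L)\varphi$ from both sides gives $L\varphi = f$.

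The only non-routine point is justifying the passage of $L_{x_0}$ inside the convolution with the singular kernel $\mathbb{J}_{x_0}$; this is where compact support of $\varphi$ and the strong-solution interpretation adopted in the paper are essential, because the identity $L_{x_0}\mathcal{S}_{x_0} g = g$ must be read in the generalized sense of Theorem~2.1 rather than by differentiating under the integral. In the first part this difficulty is avoided entirely, since one differentiates $\varphi$ and only integrates against $\mathbb{J}_{x_0}$; in the second part it is handled by testing against a smooth compactly supported function and transferring the derivatives onto $\varphi$, at which point the distributional identity $L_{x_0}\mathbb{J}_{x_0} = \delta$ yields the required pointwise a.e. conclusion $L\varphi = f$.
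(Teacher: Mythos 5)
Your proof is correct: both directions follow from the representation formula of Theorem~2.1 (i.e.\ $\mathcal{S}_{x_0}L_{x_0}\varphi=\varphi$ and $L_{x_0}\mathcal{S}_{x_0}g=g$ in the generalized sense) together with the decomposition $L_{x_0}=(L_{x_0}-L)+L$ and the definition $\mathbb{T}_{x_0}=\mathcal{S}_{x_0}(L_{x_0}-L)$. The paper does not prove this lemma itself but cites it from \cite{2}, and your argument is essentially the classical one given there, including the correct caution that $L_{x_0}\mathcal{S}_{x_0}g=g$ must be read distributionally (with a density/approximation step to pass from smooth compactly supported $\varphi$ to general compactly supported strong solutions).
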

	\subsection{\textbf{\textit{Orlicz spaces}}}
	To begin with, this section will provide an overview of Orlicz space and Orlicz-Sobolev space by presenting relevant information.
	
	\begin{definition}(\cite{30})
		Continuous convex function $M (u)$ in $R$ is called an $N$-function or Young function, if it is even and satisfies the conditions
		\begin{align*}
			\lim_{u\to 0}\frac{M(u)}{u}=0,\quad \lim_{u\to \infty}\frac{M(u)}{u}=\infty.
		\end{align*}
	\end{definition}
\begin{definition}(\cite{30})
	Let $M$ be an $N$-function. The function
	\begin{align*}
		N(v)=\max_{u\geq0}\lbrace u \vert v\vert -M(u)\rbrace, 
	\end{align*}
is called an $N$-function complementary to $M (\cdot)$.
\end{definition}
The function $N(\cdot)$ can be characterized as follows. Let the function $p(\cdot): \mathbb{R}_{+} \to \mathbb{R}_{+} =[0; +\infty)$ be right continuous for $t\geq 0$, positive for $t>0$, nondecreasing and satisfy the conditions $p(0) = 0,\quad p(\infty) = \lim_{t\to\infty}p(t)=\infty$. Define
\begin{align*}
	q(s)=\sup_{p(t)\leq s} t,\quad s\geq 0.
\end{align*}
The function $q(\cdot)$ has the same properties as the function $p(\cdot)$: it is positive
for $s > 0$ , right continuous for $s \geq 0$, non-decreasing and satisfies conditions
$q (0) = 0$, $q (\infty)$ = $\lim_{s\to\infty} q (s) = \infty$. The functions
\begin{align*}
	M(u)=\int_{0}^{\vert u\vert} p(t)dt, \quad N(v)=\int_{0}^{\vert v\vert} q(s)ds,
\end{align*}
are called $N$-functions complementary to each other.

Now let’s define the Orlicz space. Consider a function $M(\cdot)$ called an $N$-function, and let $\Omega\subset \mathbb{R}^{n}$ be a measurable set in finite-dimensional space. Denote by $L_{0}(\Omega)$ the set of all functions measurable in $\Omega$. Let
\begin{align*}
	\rho_{M}(u)=\int_{\Omega} M(u(x))dx,
\end{align*}
and
\begin{align*}
	L_{M}(\Omega)=\lbrace u\in L_{0}(\Omega): \rho_{M}(u)<\infty \rbrace. 
\end{align*}
$L_{M}(\Omega)$ is called an Orlicz class.

Let $M (\cdot)$ and $N(\cdot)$ be $N$-functions complementary to each other. Let
\begin{align*}
	L^{*}_{M}(\Omega)=\lbrace u \in L_{0}(\Omega) : \vert (u,v)\vert < +\infty,\quad \forall u (\cdot) \in L^{*}_{M}(\Omega)\rbrace,
\end{align*}
where
\begin{align*}
	(u,v)=\int_{\Omega} u(x)\overline{v(x)}dx
\end{align*}
$L^{*}_{M}(\Omega)$ is called an Orlicz space. With the norm:
\begin{align*}
	\Vert u\Vert_{M}=\sup_{\rho(v,N)\leq1}\vert (u,v)\vert,
\end{align*}
$L^{*}_{M}(\Omega)$ becomes a Banach space.

Let $\Omega\in \mathbb{R}^{n}$. We will denote the characteristic form of $\Omega$ with $\chi_{\Omega}(\cdot)$, so that the norm of characteristic form  is given the following form:
\begin{align*}
	\Vert \chi_{\Omega}\Vert_{L_{M}(\mathbb{R}^{n})}=mes(\Omega) N^{-1}\bigg(\frac{1}{mes(\Omega)}\bigg).
\end{align*}
(see more information \cite{14},\cite{21},\cite{22},\cite{30})

Now we will denote the definition of $\Delta_{2}$-condition, which it is an essential for Orlicz spaces.
\begin{definition}(\cite{30})
	$N$-function $M (\cdot)$ satisfies $\Delta_{2}$-condition for large values of $u$, if
	$\exists k > 0 \quad and \exists u_{0} \geq 0$:
	\begin{align*}
		M(2u) \leq k M (u),\quad  \forall u \geq u_{0}.
	\end{align*}
	$\Delta_{2}$-condition is equivalent to requiring that, for $\forall l > 1, \exists k (l) > 0 \quad and\quad  \exists u_{0} \geq 0$:
	\begin{align*}
			M (lu) \leq  k(l) M(u),\quad  \forall u \geq u_{0}.
	\end{align*}
\end{definition}
Let’s recall the following well known fact
\begin{statement}(\cite{30}) If N-function $M (\cdot)$ satisfies $\Delta_{2}$-condition, then $L^{*}_{M}(\Omega)=L_{M}(\Omega)$ and the closure of the set of bounded (including continuous) functions coincides with $L^{*}_{M}(\Omega)$.
	\end{statement}
\begin{statement}(\cite{30}). If N-function $M (\cdot)$ satisfies $\Delta_{2}$-condition, then $L^{*}_{M}(\Omega)$ is separable.
\end{statement}
First of all, we will give some information Boyd index for Orlicz spaces , which they play very strict role in our article.

So, let $M(\cdot)$ be some N-function and $M^{-1}(\cdot)$ be its inverse on $\mathbb{R}_{+}$. Let
\begin{align*}
	h (t) =\limsup_{x\to \infty}\frac{M^{-1}(x)}{M^{-1}(tx)},\quad t>0,
\end{align*}
and define the numbers
\begin{align*}
	\alpha_{M}=-\lim_{t\to\infty}\frac{\log h(t)}{\log t}; \quad \beta_{M}=-\lim_{t\to 0+}\frac{\log h(t)}{\log t}.
\end{align*}
The numbers $\alpha_{M}$ and $\beta_{M}$ are called upper and lower Boyd indices for the Orlicz space $L_{M}$.
The following relationship holds:
\begin{align*}
	0 \leq \alpha_{M} \leq \beta_{M} \leq 1; \quad \alpha_{M} + \beta_{N} \equiv 1;\quad  \alpha_{N}+ \beta_{M} \equiv 1.
\end{align*}

The space $L_{M}$ is reflexive if and only if $0 < \alpha_{M} \leq \beta_{M} < 1$. If $1 \leq p < \frac{1}{\beta_{M}}\leq\frac{1}{\alpha_{M}}< q \leq\infty$, then the continuous embeddings $L_{q} (\Omega) \subset L_{M}(\Omega) \subset L_{p} (\Omega)$ hold.
We will also need the following
\begin{theorem}(\cite{46})
	For every p and q such that
	\begin{align*}
		1 \leq p < \frac{1}{\beta_{M}}\leq\frac{1}{\alpha_{M}}< q \leq\infty,
	\end{align*}
	we have
	\begin{align*}
		L_{q} \subset L_{M}\subset L_{p},
	\end{align*}
	with the inclusion maps being continuous.
\end{theorem}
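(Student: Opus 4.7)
The strategy is to translate the asymptotic information encoded in the Boyd indices into concrete power-type majorisations of the $N$-function $M$, and then reduce the two embeddings to integration estimates on the bounded set $\Omega$. Fix $\epsilon>0$ such that $\alpha_M-\epsilon>1/q$ and $\beta_M+\epsilon<1/p$; these exist since $1/\alpha_M<q$ and $1/\beta_M>p$ by hypothesis.

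From the definition $\alpha_M=-\lim_{t\to\infty}\log h(t)/\log t$ one selects $t_1>1$ with $h(t_1)<t_1^{-1/q}$, and symmetrically $\beta_M=-\lim_{t\to 0+}\log h(t)/\log t$ yields $s_1\in(0,1)$ with $h(s_1)<s_1^{-1/p}$. Unfolding the $\limsup$ in the definition of $h$, these two inequalities give the one-step dilation estimates
\begin{align*}
    M^{-1}(t_1 x)\geq t_1^{1/q}M^{-1}(x),\qquad M^{-1}(s_1 x)\leq s_1^{1/p}M^{-1}(x),
\end{align*}
valid for all $x$ sufficiently large. Iterating each inequality along the geometric scales $\{t_1^k x\}$ and $\{s_1^{-k} x\}$ and performing the corresponding change of variable, one arrives at the two-sided sandwich
\begin{align*}
    c_1\,x^{1/q}\leq M^{-1}(x)\leq c_2\,x^{1/p}\qquad\text{for all }x\geq x_0,
\end{align*}
and hence, after inversion, at the pointwise comparison
\begin{align*}
    a_1|u|^p\leq M(u)\leq a_2|u|^q\qquad\text{for all }|u|\geq u_0.
\end{align*}

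Once this sandwich of $M$ between the two power functions is in hand, both embeddings reduce to routine integration on $\Omega$. For $L_q(\Omega)\subset L_M(\Omega)$: splitting $\Omega$ into $\{|u|\geq u_0\}$ and its complement and using the upper estimate $M(u)\leq a_2|u|^q$ on the first piece together with the uniform bound $M(u)\leq M(u_0)$ on the second yields $\rho_M(u)\leq a_2\|u\|_q^q+M(u_0)|\Omega|$; the convexity relation $M(u/\lambda)\leq M(u)/\lambda$ for $\lambda\geq 1$ and a homogeneity rescaling then convert this into the Luxemburg-norm inequality $\|u\|_M\leq K\|u\|_q$. The inclusion $L_M(\Omega)\subset L_p(\Omega)$ is proved symmetrically, using $|u|^p\leq a_1^{-1}M(u)$ on $\{|u|\geq u_0\}$ and $|u|^p\leq u_0^p$ on its complement. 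The principal obstacle I anticipate is the first stage: extracting from the $\limsup$-based and doubly asymptotic definition of the Boyd indices a clean, iteration-friendly inequality for $M^{-1}$ demands a careful choice of the dilation factors $t_1$, $s_1$ bounded away from $1$ and a delicate treatment of the error made for small arguments, where the power-law regime has not yet set in.
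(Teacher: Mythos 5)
The paper does not actually prove this statement; it is quoted from Lindenstrauss--Tzafriri \cite{46}, so there is no internal argument to compare with. Your direct route --- converting the Boyd-index hypotheses into a power sandwich $a_1|u|^p\le M(u)\le a_2|u|^q$ for large $|u|$ and then reducing both embeddings to modular estimates on the finite-measure set $\Omega$ --- is the standard textbook proof and is essentially sound: the existence of $t_1>1$ with $h(t_1)<t_1^{-1/q}$ and $s_1\in(0,1)$ with $h(s_1)<s_1^{-1/p}$ follows directly from the limits defining $\alpha_M,\beta_M$, the iterations stay in the region $x\ge x_0$ because both move the argument upward, and the normalisation-plus-convexity step correctly turns the modular bounds into norm bounds.

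Two points need fixing. First, your second one-step estimate is misoriented: from $h(s_1)<s_1^{-1/p}$ and the $\limsup$ definition one gets, for all large $x$, $M^{-1}(x)<s_1^{-1/p}M^{-1}(s_1x)$, i.e.\ $M^{-1}(s_1x)\ge s_1^{1/p}M^{-1}(x)$ --- the reverse of what you wrote; indeed for the model $M(u)=u^r$ with $p<r$ one has $M^{-1}(s_1x)=s_1^{1/r}M^{-1}(x)>s_1^{1/p}M^{-1}(x)$, so your stated inequality is false. Rewritten as $M^{-1}(x/s_1)\le s_1^{-1/p}M^{-1}(x)$ it is exactly what your iteration along the scales $s_1^{-k}$ requires, and the sandwich $c_1x^{1/q}\le M^{-1}(x)\le c_2x^{1/p}$ together with everything downstream is then correct, so this is a repairable slip rather than a structural gap. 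Second, the hypothesis admits $q=\infty$, where the upper power bound is meaningless; handle that case separately (trivially, $L_\infty(\Omega)\subset L_M(\Omega)$ on the bounded domain, e.g.\ via $\Vert u\Vert_M\le\Vert u\Vert_\infty\Vert\chi_\Omega\Vert_M$). Finally, note that the paper works with the Orlicz (dual) norm rather than the Luxemburg norm, so conclude via their standard equivalence, and be explicit that the finiteness of $mes(\Omega)$ is what allows the behaviour of $M$ at infinity alone (which is all the Boyd indices control here) to decide both embeddings.
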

The following interesting fact is true (see, e.g., \cite{14},\cite{32}, \cite{43}-\cite{46}).
\begin{theorem}(\cite{32})
	Let  $1<p<q<+\infty$. If the linear operator $T \in [L_{q}]$ and $T \in [L_{p}]$, then $T \in [L_{M}]$ for arbitrary Orlicz space $L_{M}$ with Boyd indices $\alpha_{M},\beta_{M}$: $\frac{1}{q}<\alpha_{M}\leq\beta_{M}<\frac{1}{p}$.
\end{theorem}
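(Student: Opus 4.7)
The plan is to prove this as an instance of Boyd's interpolation theorem for rearrangement-invariant (r.i.) Banach function spaces, to which Orlicz spaces belong. The strategy reduces boundedness of an abstract operator $T$ to boundedness of an explicit universal majorant (the Calderón operator), and then to a numerical condition on the Boyd indices.

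First, I would record that $L_M$ is rearrangement-invariant: $\Vert u\Vert_M$ depends only on the distribution function of $|u|$. Consequently the dilation operator $D_s$, $(D_s f)(x)=f(sx)$, is bounded on $L_M$ for every $s>0$, and one sets $h_M(s)=\Vert D_s\Vert_{[L_M]}$. A standard computation (see, e.g., Maligranda, or Krein--Petunin--Semenov) shows that the function $h_M$ is submultiplicative and that the indices defined via $M^{-1}$ in the preceding paragraph coincide with
\begin{align*}
\alpha_M=-\lim_{s\to 0^{+}}\frac{\log h_M(s)}{\log s},\qquad \beta_M=-\lim_{s\to\infty}\frac{\log h_M(s)}{\log s}.
\end{align*}
This equivalence is the first preliminary step.

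Next, since $T\in[L_p]\cap[L_q]$ with $1<p<q<\infty$, $T$ is, in particular, of weak types $(p,p)$ and $(q,q)$. By the Calderón--Mitjagin domination principle, the non-increasing rearrangement of $Tf$ is controlled by the Calderón operator $S_{p,q}$ applied to $f^{*}$:
\begin{align*}
(Tf)^{*}(t)\leq C\,S_{p,q}f^{*}(t),\qquad t>0,
\end{align*}
where $S_{p,q}$ consists of two Hardy-type averaging integrals with kernels $t^{-1/p}$ and $t^{-1/q}$. Since $L_M$ is r.i., it suffices to bound $S_{p,q}$ on $L_M$. Writing this operator in terms of dilations and using the submultiplicativity of $h_M$, one obtains an estimate of the form
\begin{align*}
\Vert S_{p,q}f\Vert_M \leq C\,\Vert f\Vert_M\int_0^{\infty} h_M(s)\,K_{p,q}(s)\,\frac{ds}{s},
\end{align*}
where $K_{p,q}(s)$ behaves like $s^{1/p}$ near $0$ and like $s^{1/q}$ near $\infty$. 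The convergence of this integral is equivalent to $1/q<\alpha_M\leq\beta_M<1/p$, which is exactly the hypothesis; this yields $T\in[L_M]$.

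The main obstacle is the equivalence of the two parametrizations of the Boyd indices (the analytic formula via $h(t)$ involving $M^{-1}$ versus the operator-theoretic formula via $h_M(s)=\Vert D_s\Vert_{[L_M]}$) together with the sharpness of the Calderón majorization on r.i. spaces. Both ingredients are classical but technical: the first requires a careful passage between the function $M^{-1}$ and the norm of a dilation on $L_M^{*}$, while the second requires verifying that $L_M$ has the Fatou property and finite fundamental function, so that the rearrangement inequality transfers to the norm. Once these are in place, the Marcinkiewicz-type chain outlined above delivers $\Vert T\Vert_{[L_M]}\leq C(p,q,M)\bigl(\Vert T\Vert_{[L_p]}+\Vert T\Vert_{[L_q]}\bigr)$, completing the argument.
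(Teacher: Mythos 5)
Your proposal is correct and is essentially the proof of this result in the cited source: the paper itself gives no argument for this theorem (it is imported from Boyd's work), and Boyd's proof is exactly the route you describe --- rearrangement invariance of $L_{M}$, reduction of $T\in[L_{p}]\cap[L_{q}]$ to weak types and hence to the Calder\'on-type averaging operator acting on $f^{*}$, and boundedness of that operator on $L_{M}$ via dilation norms $h_{M}(s)$ and the index condition $\frac{1}{q}<\alpha_{M}\leq\beta_{M}<\frac{1}{p}$, together with the identification of the $M^{-1}$-based indices with the dilation-operator indices. The only caveat is that your two ``technical'' ingredients (the equivalence of the index definitions and the Calder\'on majorization) are precisely the substance of the cited classical results, so as a self-contained proof they would still have to be carried out, but as an outline the argument is sound.
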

\begin{corollary}(\cite{32})
	Let $L_{M}$ be Orlicz spaces with Boyd indicies $\alpha_{M}$ and $\beta_{M}$ such that $0<\alpha_{M}\leq \beta_{M}<1$. Then the singular operator $T$ is bounded in $L_{M}$; i.e., $T \in [L_{M}]$.
\end{corollary}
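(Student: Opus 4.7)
The plan is to deduce the corollary directly from Theorem 2.3 combined with the classical Calderón--Zygmund theory on $L_p$ spaces. The assumption $0<\alpha_{M}\leq\beta_{M}<1$ is tailored precisely so that the interpolation-type hypothesis of Theorem 2.3 can be met, so the proof should just be a matter of choosing admissible exponents and invoking known boundedness results.

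First I would record the elementary consequence of $0<\alpha_{M}\leq\beta_{M}<1$, namely that $1<\frac{1}{\beta_{M}}\leq\frac{1}{\alpha_{M}}<\infty$. This immediately lets me pick real numbers $p,q$ with
\[
1<p<\frac{1}{\beta_{M}}\leq\frac{1}{\alpha_{M}}<q<\infty,
\]
which is equivalent to $\frac{1}{q}<\alpha_{M}\leq\beta_{M}<\frac{1}{p}$, i.e.\ exactly the range of Boyd indices demanded by Theorem 2.3.

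Next I would invoke the classical Calderón--Zygmund theorem: any singular integral operator $T$ of the type under consideration is bounded on $L_{r}$ for every $1<r<\infty$. In particular, for the specific $p$ and $q$ selected above, both $T\in[L_{p}]$ and $T\in[L_{q}]$ hold. Theorem 2.3 then applies verbatim with this pair $(p,q)$ and yields $T\in[L_{M}]$, which is precisely the claim.

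The proof is therefore a short two-line deduction; there is no real obstacle beyond the bookkeeping step of translating the Boyd-index condition $0<\alpha_{M}\leq\beta_{M}<1$ into the existence of admissible $p,q$. I would only double-check that the class of ``singular operators $T$'' understood here really is the Calderón--Zygmund class for which $L_{p}$-boundedness on the whole range $1<p<\infty$ is available, because Theorem 2.3 requires boundedness on \emph{both} endpoints $L_{p}$ and $L_{q}$; if this were not available on the full range, one would have to restrict the admissible Boyd indices, but under the standard interpretation the full range is indeed at our disposal and the corollary follows without further work.
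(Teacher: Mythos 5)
Your deduction is correct and is exactly the intended derivation: the paper states this corollary without proof (citing Boyd), placing it immediately after Theorem 2.3, and the singular operators in question are of Calder\'on--Zygmund type with kernel $\omega(x)/\vert x\vert^{n}$, $\omega$ homogeneous of degree zero, smooth, with vanishing mean on the unit sphere, so they are bounded on $L_{r}$ for every $1<r<\infty$. Choosing $p,q$ with $\frac{1}{q}<\alpha_{M}\leq\beta_{M}<\frac{1}{p}$ and applying Theorem 2.3, as you do, is precisely the argument the paper relies on.
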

\begin{theorem}(\cite{43})
	 The following statements are equivalent:
	 
	(a) The Orlicz space $L_M(\Omega)$ is reflexive.
	
	(b) The complementary Young functions M and N satisfy the $\Delta_{2}$ -condition.
	
	(c) The Boyd indices satisfy the inequalities 
	\begin{align*}
		0<\alpha_{M}\leq\beta_{M}<1.
	\end{align*}
\end{theorem}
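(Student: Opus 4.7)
The plan is to split the three-way equivalence into the two halves (a) $\Leftrightarrow$ (b) and (b) $\Leftrightarrow$ (c), since each half rests on distinct structural tools: the first on Orlicz duality, the second on the quantitative meaning of the Boyd indices in terms of the growth of $M$ and $N$. The two halves combine to close the loop without an additional direct argument between (a) and (c).

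For (b) $\Rightarrow$ (a), I would invoke Statement 2.1 twice: once for $M$ to conclude $L^{*}_{M}(\Omega)=L_{M}(\Omega)$, and once for the complementary $N$ to get $L^{*}_{N}(\Omega)=L_{N}(\Omega)$. Combined with the standard Orlicz duality $(L^{*}_{M}(\Omega))^{*}\cong L^{*}_{N}(\Omega)$ realized through the bilinear pairing $(u,v)=\int u\overline{v}\,dx$ and Young's inequality $uv\leq M(u)+N(v)$, one obtains $(L^{*}_{M}(\Omega))^{**}\cong (L^{*}_{N}(\Omega))^{*}\cong L^{*}_{M}(\Omega)$, which is reflexivity. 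For (a) $\Rightarrow$ (b), I would argue by contraposition: if $M\notin\Delta_{2}$, one exhibits a sequence $u_{k}$ concentrated on sets of decreasing measure with $\|u_{k}\|_{M}$ bounded below but $\rho_{M}(\lambda u_{k})\to\infty$ for every $\lambda>0$, producing a bounded sequence in $L^{*}_{M}(\Omega)$ with no weakly convergent subsequence, contradicting reflexivity via Eberlein--\v Smulian; the analogous construction starting from $N\notin\Delta_{2}$ handles the other possibility.

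For (b) $\Leftrightarrow$ (c), the key dictionary I plan to establish is
\[
M\in\Delta_{2} \ \Longleftrightarrow\ \alpha_{M}>0, \qquad N\in\Delta_{2} \ \Longleftrightarrow\ \beta_{M}<1,
\]
the second statement being immediate from the first applied to $N$ together with the identity $\alpha_{N}+\beta_{M}\equiv 1$ already recorded in the preliminaries. To prove the first equivalence, assume $M(lu)\le k(l)M(u)$ for all $u\geq u_{0}$ and every $l>1$. Passing to $M^{-1}$ yields $M^{-1}(k(l)y)\geq l\,M^{-1}(y)$ for large $y$, hence $h(k(l))\leq 1/l$, which gives $-\log h(k(l))/\log k(l)\geq \log l/\log k(l)$. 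Iterating the doubling condition shows $k(l)\leq k_{0}\,l^{\log_{2}k_{0}}$, so $\log l/\log k(l)$ is bounded below by a positive constant as $l\to\infty$, yielding $\alpha_{M}>0$. Conversely, $\alpha_{M}>0$ gives $h(t)\leq C t^{-\alpha_{M}/2}$ for $t$ large, which, inverted through the relation between $h$ and $M^{-1}$, produces $M^{-1}(t_{0}x)\geq 2M^{-1}(x)$ for a fixed $t_{0}$ and hence $M(2u)\leq t_{0}M(u)$ for large $u$.

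The main obstacle will be the quantitative bookkeeping in $M\in\Delta_{2}\Leftrightarrow\alpha_{M}>0$. One must translate between the multiplicative constant in the doubling condition and the exponential rate encoded by the $\limsup$ defining $h(t)$, without assuming continuity or strict monotonicity of $M^{-1}$. A further subtlety is that $\Delta_{2}$ here is assumed only for large values of $u$, reflecting that $\Omega$ has finite measure, so one must verify that the Boyd-index computation—a limit at infinity of a $\limsup$ at infinity—is insensitive to the behavior of $M$ on any bounded interval. Once these points are settled, the remaining density and duality arguments for (a) $\Leftrightarrow$ (b) are standard in the Orlicz-space literature.
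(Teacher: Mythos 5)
The paper does not prove this theorem at all: it is imported verbatim from the literature (the citation to Karlovich; the equivalences are classical, going back to Krasnoselskii--Rutitskii for (a)$\Leftrightarrow$(b) and to Boyd for the index characterization), so there is no in-paper argument to compare yours against. Your outline is the standard route and is essentially sound: (b)$\Rightarrow$(a) via Statement 2.1 for both $M$ and $N$ plus the duality $(L^{*}_{M})^{*}\cong L^{*}_{N}$ under $\Delta_{2}$, and (b)$\Leftrightarrow$(c) via the dictionary $M\in\Delta_{2}\Leftrightarrow\alpha_{M}>0$, $N\in\Delta_{2}\Leftrightarrow\beta_{M}<1$ (the latter correctly reduced to the former through $\alpha_{N}+\beta_{M}\equiv1$); your $\Delta_{2}\Rightarrow$ index computation and its converse are the right calculations, and your worry about $\Delta_{2}$ only holding for large $u$ is indeed harmless because $\Omega$ is bounded and the Boyd indices only see the behaviour of $M^{-1}$ at infinity.

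Three points would need to be tightened before this is a proof. First, in (b)$\Rightarrow$(a) the identifications $(L_{M})^{*}\cong L_{N}$ and $(L_{N})^{*}\cong L_{M}$ give an isomorphism of $L_{M}$ with its bidual, but reflexivity requires that the \emph{canonical} embedding be surjective; one must check that composing the two duality maps reproduces the canonical one (routine, but it is exactly the step that distinguishes reflexivity from being isomorphic to a bidual). Second, in (a)$\Rightarrow$(b) the assertion that your concentrated sequence has no weakly convergent subsequence is itself the crux; the standard way to make it rigorous is to use $M\notin\Delta_{2}$ to build a sequence of disjointly supported functions spanning an isomorphic copy of $\ell^{\infty}$ (or $c_{0}$) inside $L^{*}_{M}(\Omega)$, which kills reflexivity without any Eberlein--\v Smulian bookkeeping. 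Third, in $M\in\Delta_{2}\Rightarrow\alpha_{M}>0$ you obtain the lower bound $-\log h(t)/\log t\geq \log 2/\log k$ only along the sequence $t=k(l)$ (equivalently $t=k^{j}$); to conclude about $\alpha_{M}$ you should invoke the submultiplicativity $h(st)\leq h(s)h(t)$ and monotonicity of $h$, which guarantee that the limit defining $\alpha_{M}$ exists and equals $\sup_{t>1}\bigl(-\log h(t)/\log t\bigr)$, so a bound along a subsequence suffices. With these repairs your argument is a complete and self-contained proof of the quoted theorem, which is more than the paper itself supplies.
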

Reflexivity of $L_{M}$ is equivalent to the condition $ 1<\alpha_{M}\leq\beta_{M}<1$. It is absolutely clear that the inclusion $L_{M}\subset L_{1}$ holds and the relation
\begin{align}
	\Vert f\Vert_{1}\leq \mathcal{C}\Vert f\Vert_{M},\quad \forall f\in L_{M},
\end{align}
is true, where $\mathcal{C}>0$ is an absolute constant.
\subsection{\textbf{Some important results}}
We will establish some important findings regarding convolutions. Let $\Omega$ be a bounded domain in $\mathbb{R}^n$ with a diameter of $d_{\Omega}$. We define $Q_d$ as the cube with sides parallel to the coordinate axes, with a length of $d_{\Omega}$ and containing $\Omega$. We assume that the center of $Q_d$ is at the origin.
We have an arbitrary function $f$ defined on $\Omega$. We extend $f$ by adding zeroes to cover the entire $Q_d$, and then create a periodic extension of $f$ with a period of $d_{\Omega}$ for all variables $x_k$, where $k=1,\dots,n$. This extended function is denoted as $f_d$.
Additionally, we have a reflexive Orlicz space $L_M$ defined on $\Omega$.

Let $k = (k_{1};\dots;k_{n}) \in \mathbb{Z}_{n}$ be a vector with integer entries. The Banach function space $L_{M}(Q_{d})$ is defined with the norm $\Vert f_{d}(\cdot+k)\Vert_{L_{M}(Q_{d})} = \Vert f_{d}(\cdot)\Vert_{L_{M}(Q_{d})} =\Vert f\Vert_{M}$. This means that the norm of $f_{d}$ is shift-invariant.

The equimeasurability of $f_{d}(\cdot)$ and $f_{d}(\cdot+y)$ on $\Omega$ for every $y\in \mathbb{R}^{n}$, suggests that the reflexive Orlicz space $L_{M}$ implies that $\Vert f_{d}(\cdot+y)\Vert_{M} = \Vert f_{d}\Vert_{M} = \Vert f\Vert_{M}$. In other words, the norm of $f_{d}$ remains the same even when shifted by $y$.

Consider the convolution of $f \in L_{M}(\Omega)$ and $g \in L_{N}(\Omega)$; i.e.,
\begin{align*}
	(f * g)(x)=&\int_{Q_{d}} f_{d}(x - y)g_{d}(y)dy,\quad  x \in Q_{d}.
\end{align*}

Since $ L_{M}(\Omega),L_{N}(\Omega)\subset L_{1}(\Omega)$, the existence of convolution $(f * g)(x)$ for a.e. $x \in Q_{d}$ is undoubted. Applying
Hölder’s inequality to this integral, we obtain
\begin{align*}
	\vert (f * g)(x) \vert\leq\Vert f_{d}(x-\cdot)\Vert_{M} \Vert g_{d}(\cdot)\Vert_{N},\quad for \quad almost \quad all\quad  x\in Q_{d}.
\end{align*}
Hence, we get
\begin{align}
	\Vert f*g\Vert_{\infty}\leq\Vert f\Vert_{M}\Vert g\Vert_{N}
\end{align}

Denote by $G_{M}(\Omega)$ the subspace of functions from $L_{M}(\Omega)$ whose shifts are continuous in $L_{M}(\Omega)$; i.e., such that it defined the following form:

\begin{align*}
	G_{M}(\Omega)=\lbrace f\in L_{M}(\Omega): \Vert T_{\delta}f-f\Vert_{M}\to 0,\quad \delta\to 0\rbrace
\end{align*}
with the norm $\Vert\cdot\Vert_{M},\quad G_{M}(\Omega)$ becomes a Banach space, where $\delta\in \mathbb{R}^{n}$ is the shift vector and $T_{\delta}f(x)=f(x+\delta)$ is the shift operator. We have
\begin{align*}
	T_{\delta}(f * g)(x)=(f * g)(x+\delta)=\int_{Q_{d}}	f(x +\delta -y)g(y)dy =(T_{\delta} f *g)(x).
\end{align*}

By the periodicity of $f$ and $g$ (with period $d_{\Omega}$), we obtain
\begin{align*}
	&T_{\delta}(f * g)(x)=\int_{Q_{d}}	f(x +\delta -y)g(y)dy
	=\int_{Q_{d}\setminus d_{\Omega}}f(x -\tau)g(\delta+\tau)d\tau\\ 
	=&\int_{Q_{d}}f(x -\tau)T_{\delta}g(\tau)d\tau=(f * T_{\delta}g)(x).
\end{align*}
Inequality (2.7) implies that
\begin{align*}
	\Vert T_{\delta}(f * g)(x)-(f * g)(x)\Vert_{\infty}=\Vert (T_{\delta}f - f)(x) * g(x)\Vert_{\infty}\leq \Vert (T_{\delta}f - f)(x)\Vert_{M}\Vert g\Vert_{N}.
\end{align*}
To get our main results, we also need the following Minkowski inequality for convolution in $Q_{d}$.
\begin{theorem}
Let $L_{M}$ be reflexive Orlicz space on $Q_{d}$. Then for every $f,g\in L_{M}$, the convolution $f*g$ belong to $L_{M}$ and 	
	\begin{align*}
		\Vert f*g\Vert_{L_{M}}\leq \Vert f\Vert_{L_{M}}\Vert g\Vert_{L_{1}(\Omega)},\quad f,g\in L_{M}.
	\end{align*}
\end{theorem}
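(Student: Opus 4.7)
The plan is to prove the inequality by duality, exploiting the reflexivity of $L_M$ (Theorem 2.5) together with the defining formula of the Orlicz norm recalled in Subsection 2.2:
\begin{align*}
\Vert f*g\Vert_M = \sup_{\rho(h,N)\leq 1}\left|\int_{Q_d}(f*g)(x)\,\overline{h(x)}\,dx\right|.
\end{align*}
The task therefore reduces to bounding the bilinear pairing $(f*g,h)$ uniformly over $h\in L_N$ with $\rho(h,N)\leq 1$.

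I would fix such an $h$ and apply Fubini's theorem to the iterated integral. Absolute integrability of the map $(x,y)\mapsto f_d(x-y)\,g_d(y)\,\overline{h(x)}$ on $Q_d\times Q_d$ follows from the embeddings $L_M,L_N\subset L_1$ coming from (2.6) together with the boundedness of $Q_d$, and gives
\begin{align*}
\int_{Q_d}(f*g)(x)\,\overline{h(x)}\,dx = \int_{Q_d} g_d(y)\left(\int_{Q_d} f_d(x-y)\,\overline{h(x)}\,dx\right)dy.
\end{align*}
The inner integral is precisely the pairing $(f_d(\cdot-y),h)$, which by the norm identity above and $\rho(h,N)\leq 1$ is bounded by $\Vert f_d(\cdot-y)\Vert_M$. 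The shift-invariance $\Vert f_d(\cdot-y)\Vert_M = \Vert f\Vert_M$, established just before the theorem via equimeasurability of $f_d$ with its translates on $Q_d$, converts this into $\Vert f\Vert_M$ uniformly in $y\in Q_d$. Substituting back, one arrives at
\begin{align*}
|(f*g,h)| \leq \Vert f\Vert_M\int_{Q_d}|g_d(y)|\,dy = \Vert f\Vert_M\Vert g\Vert_{L_1(\Omega)}.
\end{align*}
Taking the supremum over admissible $h$ yields the desired inequality, and the finiteness of this bound simultaneously gives $f*g\in L_M$.

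The principal technical care is the Fubini justification, which is short given the chain $L_M,L_N\subset L_1$ from (2.6) combined with $|Q_d|<\infty$. A subsidiary point worth verifying explicitly is that the shift-invariance of $\Vert\cdot\Vert_M$ must hold for arbitrary $y\in Q_d$ and not merely for discrete translates $y\in d_\Omega\mathbb{Z}^n$; this is exactly where the periodization of $f_d$ with period $d_\Omega$ in each coordinate (and the corresponding equimeasurability argument recorded in the discussion preceding the theorem) becomes essential.
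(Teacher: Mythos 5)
Your argument is correct and is essentially the paper's own proof: both compute $\Vert f*g\Vert_M$ by duality over $\rho(v,N)\leq 1$, interchange the integrals, bound the inner pairing by $\Vert f_d(\cdot-y)\Vert_M$, and invoke the shift-invariance $\Vert f_d(\cdot-y)\Vert_M=\Vert f\Vert_M$ to extract $\Vert f\Vert_M\Vert g\Vert_{L_1(\Omega)}$. The only difference is that you make the Fubini step explicit (the paper passes over it silently); note that its cleanest justification is the uniform-in-$y$ Orlicz--H\"older bound you already use, rather than the $L_1$ embeddings alone.
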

\begin{proof}
	We have,
	\begin{align*}
			&\Vert f*g\Vert_{L_{M}}=\sup_{\rho(v,N)\leq1}\bigg\vert \int_{\Omega}(f*g)(x)v(x)dx\bigg\vert=\sup_{\rho(v,N)\leq1}\bigg\vert \int_{Q_{d}}\int_{Q_{d}}f_{d}(x-y)g_{d}(y)v_{d}(x)dydx\bigg\vert\\
			=&\sup_{\rho(v,N)\leq1}\bigg\vert \int_{Q_{d}}\int_{Q_{d}}f_{d}(x-y)v_{d}(x)dx g_{d}(y)dy\bigg\vert\leq  \int_{Q_{d}}\sup_{\rho(v,N)\leq1}\bigg\vert\int_{Q_{d}}f_{d}(x-y)v_{d}(x)dx \bigg\vert \vert g_{d}(y)\vert dy\\
			=&\int_{Q_{d}} \Vert f_{d}(\cdot-y)\Vert_{M}\vert g_{d}(y)\vert dy=\Vert f\Vert_{M}\int_{Q_{d}} \vert g_{d}(y)\vert dy=\Vert f\Vert_{M}\int_{\Omega} \vert g(y)\vert dy=\Vert f\Vert_{L_{M}}\Vert g\Vert_{L_{1}(\Omega)}.
	\end{align*}
\end{proof}
From (2.6) and theorem 2.5, we will get the following results.

\begin{corollary}
	Let $L_{M}$ be reflexive Orlicz space on $\Omega$. Then for every $f,g\in L_{M}(\Omega)$,the convolution $f*g$ also belongs to $L_{M}(\Omega)$ amd $\Vert f*g\Vert_{M}\leq \mathcal{C}\Vert f\Vert_{M}\Vert g\Vert_{M}$, where $\mathcal{C}$ is a constant independent of $f$ and $g$.
\end{corollary}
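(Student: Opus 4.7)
The plan is to chain together the two ingredients already established in this section, namely Theorem 2.5 (the Minkowski-type inequality for convolution in $L_M$) and the embedding inequality (2.6) asserting the continuous inclusion $L_M(\Omega) \subset L_1(\Omega)$ with constant $\mathcal{C}>0$.

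First I would recall that since $L_M$ is reflexive, by Theorem 2.3 (or the explicit statement just above (2.6)) we have $0 < \alpha_M \leq \beta_M < 1$, so that $L_M(\Omega) \hookrightarrow L_1(\Omega)$ continuously. In particular, every $g \in L_M(\Omega)$ belongs to $L_1(\Omega)$ with $\|g\|_1 \leq \mathcal{C}\|g\|_M$ for an absolute constant $\mathcal{C}>0$.

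Next I would apply Theorem 2.5 to the pair $f, g \in L_M(\Omega) \subset L_1(\Omega)$, which gives $f*g \in L_M(\Omega)$ and $\|f*g\|_{L_M} \leq \|f\|_{L_M}\|g\|_{L_1(\Omega)}$. Substituting the embedding bound $\|g\|_{L_1(\Omega)} \leq \mathcal{C}\|g\|_M$ yields the claimed estimate $\|f*g\|_M \leq \mathcal{C}\|f\|_M \|g\|_M$, with $\mathcal{C}$ depending only on the Orlicz space $L_M$ (in particular on its Boyd indices and on $\mathrm{mes}(\Omega)$) but not on $f$ or $g$.

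There is essentially no obstacle here: both building blocks are in place, and the argument is a one-line composition. The only mild subtlety is to make sure that the constant $\mathcal{C}$ from (2.6) is indeed absolute in the sense stated — but this has already been recorded in the excerpt immediately preceding subsection 2.3, so no extra work is needed.
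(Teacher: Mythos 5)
Your argument is correct and is exactly the paper's route: the corollary is obtained by applying Theorem 2.5 to get $\Vert f*g\Vert_{M}\leq \Vert f\Vert_{M}\Vert g\Vert_{L_{1}(\Omega)}$ and then invoking the embedding inequality (2.6), $\Vert g\Vert_{1}\leq \mathcal{C}\Vert g\Vert_{M}$. (The reflexivity statement you cite is Theorem 2.4 rather than Theorem 2.3, but this is immaterial since (2.6) is stated directly in the text.)
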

By analogy to Theorem 2.5, we can prove

\begin{lemma}
	Assume $f:\Omega \times \Omega \to R$ be a (Lebesgue) measurable function, $f(\cdot,y)\in L_{M}$ for a.e, $y\in \Omega$ and $\Vert f(\cdot,y) \Vert_{L_{M}} \in L_{1}(\Omega)$. Then 
		\begin{align*}
			\bigg\Vert \int_{\Omega} f(\cdot,y)dy\bigg\Vert_{L_{M}}\leq \int_{\Omega}\Vert f(\cdot,y)\Vert_{L_{M}}dy
		\end{align*}
	Theorem 2.5 yields
	\end{lemma}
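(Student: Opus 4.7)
The plan is to mimic the proof of Theorem 2.5, replacing the inner convolution integral against the second factor $g_d$ by an integral in $y$ of a general parametric family $f(\cdot,y)$. The duality definition of the Orlicz norm,
\begin{equation*}
\|u\|_{L_M} = \sup_{\rho(v,N)\leq 1}\left|\int_\Omega u(x)\,v(x)\,dx\right|,
\end{equation*}
is the natural starting point because it linearizes the norm and lets us interchange the order of integration cleanly.

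First I would fix an arbitrary $v\in L_N(\Omega)$ with $\rho(v,N)\leq 1$ and write
\begin{equation*}
\left|\int_\Omega v(x)\int_\Omega f(x,y)\,dy\,dx\right|
\leq \int_\Omega \int_\Omega |f(x,y)|\,|v(x)|\,dx\,dy,
\end{equation*}
the switch being justified by Fubini--Tonelli: the hypothesis $\|f(\cdot,y)\|_{L_M}\in L_1(\Omega)$, combined with Hölder's inequality in Orlicz form applied to the slices, yields
\begin{equation*}
\int_\Omega\!\!\int_\Omega |f(x,y)||v(x)|\,dx\,dy \leq \int_\Omega \|f(\cdot,y)\|_{L_M}\|v\|_{L_N}\,dy < \infty,
\end{equation*}
so $\int_\Omega f(\cdot,y)\,dy$ is well defined a.e.\ and all iterated integrals agree.

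Next I would use the dual-norm characterization inside the $y$-integral: for each $y$,
\begin{equation*}
\int_\Omega |f(x,y)|\,|v(x)|\,dx \leq \|f(\cdot,y)\|_{L_M}\,\|v\|_{L_N} \leq \|f(\cdot,y)\|_{L_M},
\end{equation*}
since the constraint $\rho(v,N)\leq 1$ implies $\|v\|_{L_N}\leq 1$ (a standard fact in Orlicz theory; more precisely one uses the Luxemburg-type bound, but in any case the supremum defining $\|\cdot\|_M$ is taken under $\rho(v,N)\leq 1$, which is exactly what appears in Theorem 2.5's computation). Integrating in $y$ and then taking the supremum over $v$ delivers the claimed inequality.

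The only real obstacle is the measurability and integrability bookkeeping needed to apply Fubini, i.e.\ verifying that $(x,y)\mapsto f(x,y)v(x)$ is jointly measurable and integrable on $\Omega\times\Omega$; this is handled by the assumption that $f$ is measurable on the product, the standing hypothesis $\|f(\cdot,y)\|_{L_M}\in L_1(\Omega)$, and the continuous embedding $L_N(\Omega)\subset L_1(\Omega)$ from the reflexivity of $L_M$. Once this is in place, the argument is essentially identical in structure to the single-variable convolution estimate in Theorem 2.5, with the periodic convolution variable $y$ replaced by the generic parameter $y\in\Omega$.
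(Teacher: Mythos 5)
Your proposal is correct and follows exactly the route the paper intends: the paper gives no separate proof, stating only that the lemma follows ``by analogy to Theorem 2.5,'' and your argument — dualizing the Orlicz norm over $\rho(v,N)\leq 1$, applying Fubini, bounding the inner integral by $\Vert f(\cdot,y)\Vert_{L_M}$ via H\"older, then integrating in $y$ and taking the supremum — is precisely that analogy carried out. Your extra care with the measurability and integrability hypotheses needed for Fubini is a welcome addition rather than a deviation.
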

 The following lemma is true.
	\begin{lemma} Let $L_{M}$ be reflexive Orlicz space on $\Omega$ such that $\Vert \chi_{E}\Vert_{L_{M}}\to 0$ as $\vert E\vert \to 0$. Hence
		$\overline{C^{\infty}_{0}(\Omega)}$=$G_{M}(\Omega)$ .
	\end{lemma}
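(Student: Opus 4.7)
The plan is to establish the two inclusions separately. For $\overline{C^\infty_0(\Omega)} \subset G_M(\Omega)$, I would first verify $C^\infty_0(\Omega) \subset G_M(\Omega)$: if $\varphi \in C^\infty_0(\Omega)$ has support $K$, then for $|\delta|$ small enough the difference $T_\delta\varphi - \varphi$ is supported in a fixed compact set $K'$ and tends to zero uniformly by uniform continuity of $\varphi$. Combined with the hypothesis, this gives
\[
\|T_\delta\varphi - \varphi\|_M \leq \|T_\delta\varphi - \varphi\|_\infty\,\|\chi_{K'}\|_M \xrightarrow{\delta \to 0} 0.
\]
Next I would verify that $G_M(\Omega)$ is closed in $L_M$ by a routine $3\varepsilon$ argument exploiting shift-invariance of the norm: if $f_n\in G_M$ and $f_n\to f$ in $L_M$, then
\[
\|T_\delta f - f\|_M \leq 2\|f-f_n\|_M + \|T_\delta f_n - f_n\|_M
\]
can be made arbitrarily small by first choosing $n$ large and then $\delta$ small. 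These two steps together yield the inclusion.

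For the reverse inclusion $G_M(\Omega) \subset \overline{C^\infty_0(\Omega)}$, I would approximate $f \in G_M(\Omega)$ by mollification followed by truncation. Let $\rho_h$ be a standard nonnegative smooth mollifier with $\mathrm{supp}\,\rho_h\subset B_h$ and $\int\rho_h=1$, and set $f_h = \rho_h * f$. Writing $f_h - f = \int \rho_h(y)(T_{-y}f - f)\,dy$ and invoking the Minkowski-type inequality of Lemma 2.2, I obtain
\[
\|f_h - f\|_M \leq \int_{B_h}\rho_h(y)\,\|T_{-y}f - f\|_M\,dy \xrightarrow{h\to 0^+} 0,
\]
since $f \in G_M$. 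Then I would pick a smooth cutoff $\eta_k$ with $0\leq \eta_k\leq 1$, $\eta_k\equiv 1$ on $\Omega_{2/k}:=\{x\in\Omega : \mathrm{dist}(x,\partial\Omega)>2/k\}$ and $\mathrm{supp}\,\eta_k \subset \Omega_{1/k}$; for $h<1/(2k)$ the product $\eta_k f_h$ is smooth on $\Omega_{1/k}$ and hence lies in $C^\infty_0(\Omega)$.

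The main obstacle is controlling the truncation error $\|f_h - \eta_k f_h\|_M \leq \|f_h\, \chi_{E_k}\|_M$ with $E_k := \Omega\setminus\Omega_{2/k}$: although $|E_k|\to 0$, the given hypothesis only governs characteristic functions, so I must promote it to arbitrary $L_M$-functions. Reflexivity is decisive here: by Theorem 2.4 both $M$ and $N$ satisfy $\Delta_2$, hence by Statement 2.1 bounded functions are norm-dense in $L_M$. Given $\varepsilon > 0$, pick bounded $g$ with $\|f-g\|_M < \varepsilon/2$. Theorem 2.5 (applied with $\rho_h\in L_1$) then gives $\|f_h - g_h\|_M \leq \|f - g\|_M$, so
\[
\|f_h\,\chi_{E_k}\|_M \leq \|(f_h - g_h)\,\chi_{E_k}\|_M + \|g_h\,\chi_{E_k}\|_M \leq \|f-g\|_M + \|g\|_\infty\,\|\chi_{E_k}\|_M < \varepsilon
\]
for all sufficiently large $k$, uniformly in $h$. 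A diagonal choice $h=h(k)\to 0$ then yields $\eta_k f_{h(k)} \in C^\infty_0(\Omega)$ converging to $f$ in $L_M$, which completes the proof.
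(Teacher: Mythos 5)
Your proof is correct, and its skeleton --- mollify $f$, use the Minkowski-type inequality to get $\Vert f_h-f\Vert_M\to 0$ from shift-continuity, then cut off near $\partial\Omega$ and kill the boundary layer with the hypothesis $\Vert\chi_E\Vert_M\to 0$ --- is the same as the paper's. The differences are in how the truncation is handled. The paper fixes the smooth approximant $g=f*\omega_\varepsilon$ first (automatically bounded, since $L_M\subset L_1$), truncates it sharply by $\chi_{\Omega_\delta}$, estimates $\Vert g-g_\delta\Vert_M\le\Vert g\Vert_\infty\,\Vert\chi_{\Omega\setminus\Omega_\delta}\Vert_M$, and then mollifies a second time to restore smoothness and compact support; you instead multiply the mollified function by a smooth cutoff, which avoids the second mollification (and the step $\Vert g_\delta-g_{\delta,\varepsilon}\Vert_M\to 0$, which the paper asserts without detail), but obliges you to control $\Vert f_h\chi_{E_k}\Vert_M$ uniformly in $h$, for which you import the density of bounded functions via reflexivity and the $\Delta_2$-condition (Theorem 2.4 and Statement 2.1). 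That detour is sound but not needed: if you choose $h$ first (so that $\Vert f-f_h\Vert_M<\varepsilon$) and only then choose $k$, the fixed $f_h$ is bounded ($\Vert f_h\Vert_\infty\le\Vert\rho_h\Vert_\infty\Vert f\Vert_{L_1}$), and $\Vert(1-\eta_k)f_h\Vert_M\le\Vert f_h\Vert_\infty\Vert\chi_{E_k}\Vert_M\to 0$ directly, which is exactly the mechanism the paper uses with its fixed bounded $g$. Finally, you also verify the inclusion $\overline{C^{\infty}_{0}(\Omega)}\subset G_M(\Omega)$ (shift-continuity of test functions plus closedness of $G_M$ under the shift-invariant norm), a step the paper leaves tacit, so your write-up is slightly more complete on that side of the equality.
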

\begin{proof}
	Assume $\omega_{\varepsilon}(\cdot)$ be an $\varepsilon$-cap, i.e, for $x\in Q_{d}$. 
\begin{align*}
	\omega_{\varepsilon}(x)=\begin{cases}
		C_{\varepsilon}e^{-\frac{\varepsilon^{2}}{\varepsilon^{2}-\vert x\vert^{2}}}, \quad \vert x\vert<\varepsilon,\\
		0,\quad\quad\quad\quad\quad \vert x\vert>0.
	\end{cases}
\end{align*}
where $C_{\varepsilon}$ is a constant such that
\begin{align*}
	\int_{\mathbb{R}^{n}}\omega_{\varepsilon}(x)dx=1, 
\end{align*}

Extend $\omega_{\varepsilon}(\cdot)$ to the whole of $\mathbb{R}^{n}$ periodically with period $d_{\Omega}$. Assume $f\in G_{M}$ be an any function. Mention the convolution between $f$ and $\omega_{\varepsilon}$ by $f_{\varepsilon}$, i.e. $f_{\varepsilon}=f*\omega_{\varepsilon}$.

\begin{align*}
	&f_{\varepsilon}(x)=\int_{Q_{d}}f_{d}(y)\omega_{\varepsilon}(x-y)dy=\int_{Q_{d}}f_{d}(x-y)\omega_{\varepsilon}(y)dy=(f*\omega_{\varepsilon})(x)
\end{align*}
we obtain
\begin{align*}
	&\Vert f-f_{\varepsilon}\Vert_{M}=\Bigg\Vert \int_{Q_{d}}f(\cdot)\omega_{\varepsilon}(y)dy-\int_{Q_{d}}f(\cdot-y)\omega_{\varepsilon}(y)dy\bigg\Vert_{M}\\
	=&\bigg\Vert\int_{Q_{d}}(f(x)-f(x-y))\omega_{\varepsilon}(y)dy\bigg\Vert_{L_{M}}
	\leq \int_{Q_{d}}\Vert f(\cdot)-f(\cdot-y)\Vert_{L_{M}}\omega_{\varepsilon}(y)dy\\
	\leq& \sup_{\vert y\vert\leq\varepsilon}\Vert f(\cdot-y)-f(\cdot)\Vert_{M}\to 0. \quad \varepsilon\to 0.
\end{align*}
As $f_{\varepsilon}\in C^{\infty}(\bar{\Omega})$ ($C^{\infty}(\bar{\Omega})$ is a space of infinitely differentiable functions on $\bar{\Omega})$, it directly follows that $\overline{C^{\infty}(\bar{\Omega})}=G_{M}(\Omega)$. Consequently, for every $\varepsilon>0$, there exists $g\in C^{\infty}(\bar{\Omega})$ we get
\begin{align*}
	 \Vert f-g\Vert_{M}<\varepsilon.
\end{align*}
Assume that $\delta \in \mathbb{R}^{n}$ be so that $\vert \delta\vert>0$ is some enough small number. Suppose that
\begin{align*}
	\Omega_{\delta}=\lbrace x\in \Omega: \quad \rho(x;\partial\Omega)\geq \vert \delta\vert\rbrace.
\end{align*}
It is obvious that $mes(\Omega\setminus \Omega_{\delta})\to 0, \quad \delta\to 0$. Assume
\begin{align*}
	g_{\delta}(x)=\begin{cases}
		g(x), \quad x\in \Omega_{\delta},\\
		0,\quad \quad  x \not\in \Omega_{\delta}.
	\end{cases}
\end{align*}
Also suppose $g_{\delta, \varepsilon}(x)=(g_{\delta}*\omega_{\varepsilon})(x)$. It is not difficult to see that $g_{\delta, \varepsilon}(x)\in C^{\infty}_{0}(\Omega)$ for $\varepsilon<\frac{\delta}{2}$. We obtain
\begin{align*}
	&\Vert g-g_{\delta}\Vert_{M}=\sup_{\rho(v,N)\leq1}\bigg\vert \int_{\Omega\setminus \Omega_{\delta}} g(x)v(x)dx\bigg\vert
	\leq\sup_{\rho(v,N)\leq1}  \int_{\Omega\setminus \Omega_{\delta}} \vert g(x)\vert\vert v(x)\vert dx\\
	\leq &\Vert g\Vert_{\infty} \sup_{\rho(v,N)\leq1}  \int_{\Omega\setminus \Omega_{\delta}}\vert v(x)\vert dx\leq \Vert g\Vert_{\infty} mes\big(\Omega\setminus \Omega_{\delta}\big)N^{-1}\bigg(\frac{1}{mes(\Omega\setminus \Omega_{\delta})}\bigg)\to 0, \quad \delta\to 0.\\
\end{align*}
On the other hand, $\big\Vert g_{\delta}-g_{\delta, \varepsilon}\big\Vert_{M}\to 0, \varepsilon\to 0$. Hence, from the estimate
\begin{align*}
	&\Vert f-g_{\delta, \varepsilon}\Vert_{M}\leq \Vert f-g\Vert_{M}+\Vert g-g_{\delta}\Vert_{M}+\Vert  g_{\delta}-g_{\delta, \varepsilon}\Vert_{M}\\
&	\implies \Vert f-g_{\delta, \varepsilon}\Vert_{M}\to 0,\quad 0<\varepsilon<\frac{\vert \delta\vert}{2}\to 0, \quad \delta \to 0.
\end{align*}
It directly follows $\overline{C^{\infty}_{0}(\Omega)} =G_{M}(\Omega)$.
\end{proof}

Consider the following singular kernel
\begin{align*}
	k(x)=\frac{\omega(x)}{\vert x\vert^{n}},
\end{align*}
where $\omega(x)$ is a positive homogeneous function of degree zero, which is infinitely differentiable and satisfies the following condition:
\begin{align*}
	\int_{\vert x\vert=1}\omega(x)d\sigma=0,
\end{align*}
$d\sigma$ being a surface element on the unit sphere. Denote by K the corresponding singular integral 
\begin{align*}
	(Kf)(x)=k*f(x)=\int_{\Omega}f(y)k(x-y)dy.
\end{align*}

Let us represent that $G_{M}(\Omega)$ is an invariant subspace of the operator K. To do so, it is enough to represent that 
\begin{align*}
	\Vert T_{\delta}Kf-Kf\Vert_{M}\to0, \quad \delta\to 0.
\end{align*}
Assume $\varepsilon>0$ be any given number. Hence there exists $g\in C^{\infty}_{0}(\Omega)$: $\Vert f-g\Vert<\varepsilon$. We get
\begin{align}
	&\Vert (Kf)(x+\delta)-(Kf)(x)\Vert_{M}=\Vert (K(f-g))(x+\delta)+(Kg)(x+\delta)-(K(f-g))(x)-(Kg)(x)\Vert_{M}\nonumber\\
	\leq& \Vert (K(f-g))(\cdot+\delta)\Vert_{M}+\Vert K(f-g)(\cdot)\Vert_{M}+\Vert (Kg)(\cdot+\delta)-(Kg)(\cdot)\Vert_{M}
\end{align}
Assume 
\begin{align*}
	Q_{d}(\delta)=\lbrace y-\delta : y\in Q_{d}\rbrace=\lbrace\tau: t+\delta\in Q_{d}\rbrace.
\end{align*}
Therefore
\begin{align*}
	(K(f-g))(x+\delta)=&\int_{Q_{d}}k_{d}(x+\delta-y)(f_{d}-g_{d})(y)dy\\
	=& \int_{Q_{d}(\delta)}k_{d}(x-\tau)(f_{d}-g_{d})(\tau+\delta)d\tau\\
	&(by \quad the \quad periodicity\quad  of\quad the\quad integrands)&\\
	=& \int_{Q_{d}}k_{d}(x-\tau)(f_{d}-g_{d})(\tau+\delta)d\tau.
\end{align*}
Hence
\begin{align*}
	\Vert (K(f-g))(\cdot+\delta)\Vert_{M}\leq \Vert K\Vert_{[L_{M}]}\Vert (f_{d}-g_{d})(\cdot+\delta)\Vert_{M},
\end{align*}
It is clear that
\begin{align*}
	\Vert (f_{d}-g_{d})(\cdot+\delta)\Vert_{M}=\Vert (f_{d}-g_{d})(\cdot)\Vert_{M}=\Vert f-g\Vert_{M}\leq\varepsilon
\end{align*}
From (2.7) we will get
\begin{align*}
		&\Vert (Kf)(x+\delta)-(Kf)(x)\Vert_{M}\leq \Vert (K(f-g))(\cdot+\delta)\Vert_{M}+\Vert K(f-g)(\cdot)\Vert_{M}+\Vert (Kg)(\cdot+\delta)-(Kg)(\cdot)\Vert_{M}\\
		\leq&\Vert K\Vert_{[L_{M}]}\Vert (f-g)(\cdot+\delta)\Vert_{M}+\Vert K\Vert_{[L_{M}]}\Vert f-g\Vert_{L_{M}(\Omega)}+\Vert (Kg)(\cdot+\delta)-(Kg)(\cdot)\Vert_{M}\\
		\leq& 2\Vert K\Vert_{[L_{M}]} \varepsilon+\Vert (Kg)(\cdot+\delta)-(Kg)(\cdot)\Vert_{M}
\end{align*}
It is suffices to show that
\begin{align*}
	\Vert (Kg)(\cdot+\delta)-(Kg)(\cdot)\Vert_{M}\to 0, \quad \delta\to 0, \quad for \quad g\in C^{\infty}_{0}(\Omega).
\end{align*}
\begin{align*}
	(Kg)(x+\delta)=\int_{Q_{d}} k_{d}(x-\tau)g_{d}(\tau+\delta)d\tau,
\end{align*}
\begin{align}
	\Vert (Kg)(\cdot+\delta)-(Kg)(\cdot)\Vert_{M}=&\bigg\Vert \int_{Q_{d}}[g_{d}(y+\delta)-g_{d}(y)]k_{d}(x-y)dy\bigg\Vert_{M}\nonumber\\
	\leq& \Vert K\Vert_{[L_{M}]} \Vert g_{d}(\cdot+\delta)-g_{d}(\cdot)\Vert_{M},
\end{align}
The uniform continuity of $g_{d}$ in $\mathbb{R}^{n}$ implies the existence of $\delta_{0} > 0$ such that $\vert g_{d}(x + \delta) - g_{d}(x)\vert < \varepsilon $ for
any $x \in \mathbb{R}^{n}$ and $\delta \in \mathbb{R}^{n}$ such that $\vert \delta\vert < \delta_{0}$ ; therefore, $\Vert g_{d}(\cdot + \delta) -g_{d}(\cdot)\Vert_{L^{\infty}(\Omega_{d})} <\varepsilon$. By using this fact in (2.8), we achieve
\begin{align*}
		\Vert (Kg)(\cdot+\delta)-(Kg)(\cdot)\Vert_{M}=\Vert K\Vert_{[L_{M}]}\Vert 1\Vert_{M}
\end{align*}
for all $\delta \in \mathbb{R}^{n}$ such that $\vert\delta\vert <\delta_{0}$. Hence, $Kg \in G_{M}$.
It is easy to note that if $\alpha_{M} > 0$, then $\Vert \chi_{E}\Vert \to 0$ as$\vert E\vert\to0$. Indeed, it is clear that in this case there exists $p \in (1,+\infty)$ such that $1/\alpha_{M}< p$. By Theorem 2.2, the continuous embedding $L_{p} \subset L_{M}$ is valid, i.e., there exists $\mathcal{C}>0$ such that $\Vert f\Vert_{L_{M}(\Omega)} \leq \mathcal{C}\Vert f\Vert_{L^{p}(\Omega)}$. Thus, $\Vert \chi_{E}\Vert_{M}\leq \mathcal{C} \Vert \chi_{E}\Vert_{L^{p}(\Omega)}=\mathcal{C}\vert E\vert^{1/p}\to 0$, as $\vert E\vert\to 0$.
As a result, the following lemma is proved.
\begin{lemma}Let $L_{M}$ be Orlicz space on $\Omega$ with  Boyd indices $\alpha_{M},\beta_{M} \in (0,1)$.
	$G_{M}(\Omega)$, is an invariant subspace of the singular operator $K$ in $L_{M}(\Omega)$.
\end{lemma}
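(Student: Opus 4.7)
The plan is to verify the invariance criterion directly from the definition of $G_M$: show that for every $f\in G_M(\Omega)$, the translate $T_\delta(Kf)$ converges to $Kf$ in the $L_M$ norm as $\vert\delta\vert\to 0$. Two ingredients from the preceding material carry most of the weight. First, Corollary 2.1 guarantees that $K$ extends to a bounded operator on $L_M$ because $0<\alpha_M\le\beta_M<1$. Second, Lemma 2.3 delivers the density $\overline{C_0^\infty(\Omega)}=G_M(\Omega)$, provided $\Vert\chi_E\Vert_M\to 0$ as $\vert E\vert\to 0$; the assumption $\alpha_M>0$ supplies this hypothesis via Theorem 2.2, since picking $p$ with $1/\alpha_M<p<\infty$ yields the continuous embedding $L_p\subset L_M$ and hence $\Vert\chi_E\Vert_M\le \mathcal{C}\vert E\vert^{1/p}$.

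With these ingredients I would run a standard $\varepsilon/3$ approximation argument. Fix $\varepsilon>0$ and pick $g\in C_0^\infty(\Omega)$ with $\Vert f-g\Vert_M<\varepsilon$. Splitting $Kf=K(f-g)+Kg$ and applying the triangle inequality to $\Vert T_\delta(Kf)-Kf\Vert_M$, I would bound the two pieces involving $f-g$ by combining boundedness of $K$ on $L_M$ with the shift-invariance $\Vert T_\delta h\Vert_M=\Vert h\Vert_M$ of the Orlicz norm on the periodic extensions over $Q_d$; this controls these terms by $2\Vert K\Vert_{[L_M]}\varepsilon$.

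It then suffices to establish $\Vert T_\delta(Kg)-Kg\Vert_M\to 0$ for $g\in C_0^\infty(\Omega)$. For this I would use periodicity of $k_d$ and $g_d$ to change variables inside the convolution integral and obtain the commutation identity $T_\delta(Kg)=K(T_\delta g)$; boundedness of $K$ then gives
\[
\Vert T_\delta(Kg)-Kg\Vert_M\le \Vert K\Vert_{[L_M]}\,\Vert T_\delta g-g\Vert_M.
\]
Uniform continuity of the smooth compactly supported function $g$ gives $\Vert T_\delta g-g\Vert_\infty\to 0$, and since $\Omega$ is bounded the crude estimate $\Vert T_\delta g-g\Vert_M\le \Vert T_\delta g-g\Vert_\infty\Vert 1\Vert_M$ finishes the convergence.

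The main obstacle is the commutation identity $T_\delta K=K T_\delta$ on the periodic extensions: the singular kernel $k(x)=\omega(x)\vert x\vert^{-n}$ must be interpreted as a principal value, and its periodic extension $k_d$ must be handled carefully so that the substitution inside the convolution respects both the periodicity over $Q_d$ and the cancellation $\int_{\vert x\vert=1}\omega\,d\sigma=0$. Everything else is a routine density-plus-bounded-operator scheme, riding on the two deep inputs (boundedness of $K$ from Corollary 2.1 and density from Lemma 2.3) that the paper has already placed on the table.
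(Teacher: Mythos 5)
Your proposal is correct and follows essentially the same route as the paper's own argument: density of $C_{0}^{\infty}(\Omega)$ in $G_{M}(\Omega)$ (Lemma 2.3, with the $\alpha_{M}>0$ check that $\Vert \chi_{E}\Vert_{M}\to 0$), boundedness of $K$ on $L_{M}$ via Corollary 2.1, the splitting $Kf=K(f-g)+Kg$ with shift-invariance of the norm on the periodic extension, the periodicity-based identity $T_{\delta}(Kg)=K(T_{\delta}g)$, and uniform continuity of $g$ together with $\Vert T_{\delta}g-g\Vert_{M}\leq \Vert T_{\delta}g-g\Vert_{\infty}\Vert 1\Vert_{M}$ to finish. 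No substantive difference from the paper's proof.
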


	\section{\textbf{\textit{The Orlicz-Sobolev space $W^{m}_{G_{M},d_{\Omega}}(\Omega)$. Main lemma}}}	
	In this section, we will begin by introducing the Orlicz-Sobolev space in the following manner.
	We will denote the Orlicz-Sobolev space as $W^{m}_{M}(\Omega)$ which is defined using the following norm:
	
	\begin{align*}
			\Vert f\Vert_{W^{m}_{M}(\Omega)}=\sum_{\vert p\vert=0}^{m}\Vert \partial ^{p}f\Vert_{L_{M}},
	\end{align*}
	
	Here, $\partial ^{p}f$ represents the generalized derivative of a function $f\in L_{M}(\Omega)$ according to Sobolev's definition.

	Assume
	\begin{align*}
		W^{m}_{G_{M}}(\Omega)=\lbrace f\in W^{m}_{M}(\Omega): \Vert T_{\delta}f-f\Vert_{W^{m}_{M}}\to 0, \quad \delta\to 0\rbrace.
	\end{align*}

In the sequel, when $\Omega=\mathbb{B}{r}$, these spaces are denoted by $L_{M}(r)$, $G_{M}(r)$, $W^{m}_{M}(r)$, and $W^{m}_{G_{M}}(r)$. Along with $W^{m}_{G_{M}}(\Omega)$, we consider the space of functions $W^{m}_{G_{M}.d_{\Omega}}(\Omega)$, equipped with the norm
\begin{align*}
		\Vert f\Vert_{W^{m}_{G_{M},d_{\Omega}}(\Omega)}=\sum_{\vert p\vert \leq m} d^{\vert p\vert}_{\Omega} \Vert \partial ^{p}f\Vert_{L_{M}(\Omega)},
\end{align*}
where $d_{\Omega}=diam(\Omega)$. It can be seen that the norms of the spaces $W^{m}_{G_{M}}(\Omega)$ and $W^{m}_{M.d_{\Omega}}(\Omega)$ are equivalent to each other, and therefore their sets of functions are the same. Thus, it is sufficient to prove the existence of a solution to the equation $Lu=f$ in $W^{m}_{G_{M}}(\Omega)$ for the space $W^{m}_{G_{M}.d_{\Omega}}(\Omega)$. First, let us introduce the following definition.

	\begin{definition}
		We will state that the operator L has the property $\mathscr{P}_{x_{0}})$ if its coefficients holds the following conditions:
		
		1) $a_{p}\in L_{\infty}(\mathbb{B}_{r}(x_{0})), \forall\vert p\vert\leq m$, for some $r>0$,
		
		2) There is a positive value of $r$, such that when the absolute value of $p$ is equal to $m$, the coefficient $a_{p}(\cdot)$ is the same almost everywhere in $\mathbb{B}_{r}(x_{0})$ as a function that is both bounded and continuous at the point $x_{0}$.
	\end{definition}

The statement that L has the property $\mathscr{P}_{x{0}}$ for all $x_0 \in \Omega$ is true if the coefficients $a_{p}$ belong to the space $C(\Omega)$ for all $\vert p\vert\leq m$.

We can examine the elliptic operator of order m with coefficients $a_p$ given by equation (2.1) and the corresponding operator $\mathbb{T}_{x_{0}}$ given by equation (2.4). The operators corresponding to the point $x_{0}=0$ can be denoted by $\mathcal{S}_{0}$,  $L_{0}$, and $\mathbb{T}_{0}$. The main lemma can be proven as follows.
	\begin{lemma}(\textbf{Main Lemma})		
		Let $L_{M}$ be an Orlicz space defined on $\Omega$ with Boyd indices $\alpha_{M},\beta_{M} \in (0,1)$. Consider an elliptic operator $L$ of order $m$ that has the property $\mathscr{P}_{x_{0}})$ at a point $x_{0}\in \Omega$, and let $\varphi\in \dot{W}G^{m}_{M}(\mathbb{B}{r}(x_{0})), r>0$. Hence
		\begin{align*}
				\Vert \mathbb{T} _{0}\varphi\Vert_{W^{m}_{G_{M},d_{\Omega}}(\mathbb{B}_{r}(x_{0}))}\leq& \sigma(r)\Vert \varphi\Vert_{W^{m}_{G_{M},d_{\Omega}}(\mathbb{B}_{r}(x_{0}))},
		\end{align*}
		where the constant $\sigma(r)\to 0$  as $r\to 0$, and it depends only on the ellipticity constants $\mathrm{L}_{x_{0}}$ and the coefficients of the operator $\mathrm{L} $.
	\end{lemma}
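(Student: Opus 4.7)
My first step is to split the source,
\begin{align*}
(L_0-L)\varphi = \sum_{\vert p\vert=m}\bigl(a_p(0)-a_p(x)\bigr)\partial^p\varphi \;-\; \sum_{\vert p\vert<m}a_p(x)\partial^p\varphi,
\end{align*}
and then to estimate $\partial^{q}\mathbb{T}_0\varphi=\partial^{q}\mathcal{S}_0[(L_0-L)\varphi]$ for every multi-index $q$ with $\vert q\vert\le m$. The smallness of the top-order piece comes from property $\mathscr{P}_{x_0})$: by continuity of the top-order coefficients at $x_0$, the modulus
\begin{align*}
\omega(r)=\max_{\vert p\vert=m}\esssup_{x\in\mathbb{B}_r(x_0)}\vert a_p(x)-a_p(0)\vert
\end{align*}
tends to $0$ as $r\to 0$. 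The smallness of the lower-order piece will be extracted as positive powers $r^{m-\vert p\vert}$ that appear when the weights $(2r)^{\vert q\vert}$ in the $W^{m}_{G_M,d_\Omega}$-norm are redistributed.

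The derivatives of the parametrix split into two regimes. For $\vert q\vert<m$, the kernel $\partial^{q}\mathbb{J}_0$ is locally integrable (homogeneous of order $m-n-\vert q\vert>-n$, plus the harmless logarithmic term in the exceptional even $n\le m$ case), and a direct computation gives $\Vert\partial^{q}\mathbb{J}_0\cdot\chi_{\mathbb{B}_{2r}}\Vert_{L_1}\le Cr^{m-\vert q\vert}$; Theorem~2.5 then converts the convolution into an $L_M$ bound. For $\vert q\vert=m$, distributional differentiation rewrites $\partial^{q}\mathcal{S}_0$ as a principal value singular integral $T_q$ whose kernel is homogeneous of degree $-n$ with zero mean on the unit sphere (a mean-zero property forced by $L_0\mathbb{J}_0=\delta$), so $T_q$ is a Calder\'on--Zygmund operator; Corollary~2.1 delivers $T_q\in[L_M]$ under the hypothesis $\alpha_M,\beta_M\in(0,1)$. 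Applying these two bounds term-by-term to $(L_0-L)\varphi$ produces, for $\vert q\vert<m$, the estimate
\begin{align*}
\Vert\partial^{q}\mathbb{T}_0\varphi\Vert_{L_M}\le Cr^{m-\vert q\vert}\Bigl[\omega(r)\!\!\sum_{\vert p\vert=m}\!\Vert\partial^{p}\varphi\Vert_{L_M}+\!\!\sum_{\vert p\vert<m}\!\Vert a_p\Vert_{\infty}\Vert\partial^{p}\varphi\Vert_{L_M}\Bigr],
\end{align*}
and, for $\vert q\vert=m$, the same bracket without the $r^{m-\vert q\vert}$ factor.

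Multiplying by $(2r)^{\vert q\vert}$, using $(2r)^{\vert q\vert}r^{m-\vert q\vert}\asymp r^{m-\vert p\vert}(2r)^{\vert p\vert}$ to absorb residual powers of $r$ into the weighted norm of $\varphi$, and summing over $\vert q\vert\le m$ yields
\begin{align*}
\Vert\mathbb{T}_0\varphi\Vert_{W^{m}_{G_M,d_\Omega}(\mathbb{B}_r(x_0))}\le C\bigl[\omega(r)+r\bigr]\,\Vert\varphi\Vert_{W^{m}_{G_M,d_\Omega}(\mathbb{B}_r(x_0))},
\end{align*}
so $\sigma(r)=C[\omega(r)+r]\to 0$ as $r\to 0$. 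The principal obstacle is the critical regime $\vert q\vert=m$, where the kernel is no longer absolutely integrable and only the Calder\'on--Zygmund boundedness of singular integrals in $L_M$ (Corollary~2.1, resting on $\alpha_M,\beta_M\in(0,1)$) closes the estimate. A secondary technical point is to legitimize the distributional identities $\partial^{q}\mathcal{S}_0 h = T_q h$ for $h=(L_0-L)\varphi\in L_M$ by first establishing them for $\varphi\in C_0^{\infty}(\mathbb{B}_r(x_0))$ and then passing to the limit using the density of $C_0^{\infty}$ in $\dot{W}G^{m}_{M}$ guaranteed by the arguments of Lemma~2.3.
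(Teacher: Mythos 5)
Your proposal is correct and follows essentially the same route as the paper's own proof: the same splitting of $(L_0-L)\varphi$ into a top-order piece controlled by the continuity modulus at $x_0$ and a lower-order piece, the Young/Minkowski convolution estimate (Theorem~2.5) with the kernel bound $\vert\partial^{q}\mathbb{J}_0(x)\vert\le C\vert x\vert^{m-n-\vert q\vert}$ for $\vert q\vert<m$, the Calder\'on--Zygmund boundedness in $L_M$ via the Boyd indices (Corollary~2.1) for $\vert q\vert=m$, and the same redistribution of powers of $r$ to produce $\sigma(r)\to 0$. Your added remarks on the logarithmic case and on justifying the distributional identities by density are harmless refinements of the same argument.
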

	\begin{proof}
		We will use the idea of [2, p.236]. Without spoiling the generality we suppose $x_{0}=0$. With the value of the function $a_{p}(\cdot)$ at the point $x_{0}=0$ for $\vert p\vert=m$  we will consider the value of the corresponding function from the continuous property $P_{x_{0}})$ at this point. Due to simplicity, we suppose that $n\geq 3$ and odd, with $r<1$. Following [2], we suppose
		\begin{align*}
			\psi=&(\mathrm{L} _{0}-\mathrm{L} )\varphi=\psi_{1}+\psi_{2},\\
			\psi_{1}(x)=&\sum_{\vert p\vert =m} (a_{p}(0)-a_{p}(x))\partial ^{p}\varphi(x)=\sum_{\vert p\vert =m} b_{p}(x)\partial ^{p}\varphi(x),
		\end{align*}
		where $b_{p}(x)=a_{p}(0)-a_{p}(x)$. Clearly, $b_{p}(0)=0$, and as a result, $\sup_{\vert x\vert <r}\vert b_{p}(x)\vert=\bar{\bar{o}}(1), t\to 0$;
		\begin{align*}
			\psi_{2}(x)=&-\sum_{\vert p\vert <m} a_{p}(x)\partial ^{p}\varphi(x), \quad r\to 0.
		\end{align*}
		It is clear that
		\begin{align*}
			\Vert \psi_{1}\Vert_{L_{M}}(r)\leq& \bar{\bar{o}}(1)\sum_{\vert p\vert =m} \Vert \partial ^{p}\varphi(x)\Vert_{L_{M}}(r),\quad r\to 0,
		\end{align*}
		satisfies.
		
		Let $\chi=\mathbb{T} _{0}\varphi$. Considering the expression for $\mathbb{T} _{0}$, we get
		\begin{align*}
			\chi=\mathcal{S} _{0}(\mathrm{L} _{0}-\mathrm{L} )\varphi=\mathcal{S} _{0}\phi=\int_{\mathbb{B}_{r}} \mathbb{J}_{0}(x-y)\phi(y)dy.
		\end{align*}
		For the $\vert p\vert <m$, we obtain 
		\begin{align*}
			\partial ^{p}\chi(x)=\int_{\mathbb{B}_{r}}	\partial ^{p}_{x}\mathbb{J}_{0}(x-y)\phi(y)dy.
		\end{align*}
		In this part, the following estimate is valid for the derivatives $\partial ^{p}\mathbb{J}_{0}$:
		\begin{align*}
			\vert \partial ^{p}\mathbb{J}_{0}(x)\vert \leq \mathcal{C}\vert x\vert ^{m-n-\vert p\vert}.
		\end{align*}
		Hence for $	\partial ^{p}\chi(x)$ we get
		\begin{align}
			\vert 	\partial ^{p}\chi(x)\vert \leq \mathcal{C}\int_{\vert y\vert <r}\vert x-y\vert^{m-n-\vert p\vert}\vert \psi(y)\vert dy.
		\end{align}
		We will use the following popular formula for $\alpha<n$.
		\begin{align}
			\int_{\vert x- y\vert <r}\frac{dy}{\vert x-y\vert^{\alpha}}=\frac{\vert \mathbb{B}_{1}\vert r^{n-\alpha}}{n-\alpha}, \quad \forall x\in \mathcal{R} ^{n},
		\end{align}
		where $\vert \mathbb{B}_{1}\vert$ is a volume of a unit ball $ \mathbb{B}_{1}$ in $\mathcal{R} ^{n}$.(see, e.g. 4, p.19).
		
		\begin{align*}
			\mathscr{I}(x)=\int_{\vert y\vert <r}\vert x-y\vert^{m-n-\vert p\vert}\vert \psi(y)\vert dy.
		\end{align*}
	To this aim, define
		\begin{align*}
			f(x)=&
			\begin{cases*}
				\vert x\vert^{m-n-\vert p\vert}, \quad \vert x\vert <r,\\
				0,\quad \quad \quad\quad \quad  \vert x\vert \geq r.
			\end{cases*}	\quad\quad
			g(x)=\begin{cases*}
				\psi(y), \quad \vert x\vert <r,\\
				0,\quad \quad \vert x\vert \geq r.
			\end{cases*}	
		\end{align*}
	Hence it is not difficult to see that $supp(f*g)\subset B_{2r}$, and therefore, by theorem 2.4, we get
		\begin{align}
			\Vert \mathscr{I}(\cdot)\Vert_{M}\leq& \Vert f*g\Vert_{M}\leq \Vert f\Vert_{L_{1}(\mathcal{R} ^{n})}\Vert g\Vert_{M}.
		\end{align}
	Applying the formula (3.2) to $\Vert f\Vert_{L_{1}(\mathcal{R} ^{n})}$, we have
		\begin{align*}
			\Vert f\Vert_{L_{1}(\mathcal{R} ^{n})}=\Vert f\Vert_{L_{1}(r)}=\int_{\mathbb{B}_{r}}\vert x\vert^{m-n-\vert p\vert}dx=\frac{\vert \mathbb{B}_{1}\vert2^{m-\vert p\vert}}{m-\vert p\vert}r^{m-\vert p\vert}.
		\end{align*}
	From above expression and (3.1), we achieve
		\begin{align*}
			\Vert \partial ^{p}\chi\Vert_{L_{M}(r)}\leq \mathcal{C}\Vert \mathscr{I}(x)\Vert_{L_{M}(r)}\leq \mathcal{C}r^{m-\vert p\vert}\Vert \psi \Vert_{L_{M}(r)},
		\end{align*}
	Then, we get
		\begin{align*}
			r^{\vert p\vert}\Vert \partial ^{p}\chi\Vert_{L_{M}(r)}\leq \mathcal{C}r^{m}\Vert \psi \Vert_{L_{M}(r)}.
		\end{align*}
     Consequently, taking into account the estimate for $\Vert \psi_{1}\Vert_{L_{M}(r)}$, we get
		\begin{align*}
			&	r^{\vert p\vert}\Vert \partial ^{p}\chi\Vert_{L_{M}(r)}\leq \mathcal{C}r^{m} (\Vert \psi_{1}\Vert_{L_{M}(r)}+\Vert \psi_{2}\Vert_{L_{M}(r)} )\\
			\leq&\mathcal{C}r^{m}\bigg( \bar{\bar{o}}(1)\sum_{\vert p\vert =m} \Vert \partial ^{p}\varphi\Vert_{L_{M}(r)}+\sum_{\vert p\vert<m} \Vert \partial ^{p}\varphi\Vert_{L_{M}(r)}\bigg)\\
			\leq&\mathcal{C}\bigg(\bar{\bar{o}}(1)\sum_{\vert p\vert =m}r^{m}\Vert \partial ^{p}\varphi\Vert_{L_{M}(r)}+ \sum_{\vert p\vert<m} r^{m-\vert p\vert} r^{\vert p\vert} \Vert \partial ^{p}\varphi\Vert_{L_{M}(r)}\bigg)\\
			\leq&\mathcal{C}\bigg(\bar{\bar{o}}(1)\sum_{\vert p\vert =m}r^{m}\Vert \partial ^{p}\varphi\Vert_{L_{M}(r)}+ r \sum_{\vert p\vert<m}r^{\vert p\vert} \Vert \partial ^{p}\varphi\Vert_{L_{M}(r)}\bigg)\\
			\leq& \sigma(r) \Vert \varphi\Vert _{W^{m}_{G_{M},d_{\Omega}}(r)},
		\end{align*}
		for $r$ small enough and $\sigma(r)$ vanishing function as $r\to 0$. Consequently, the following inequality is correct
		\begin{align}
			&	r^{\vert p\vert}\Vert \partial ^{p}\chi\Vert_{L_{M}(r)}\leq\sigma(r) \Vert \varphi\Vert _{W^{m}_{G_{M},d_{\Omega}}(r)},\quad r\to 0,\quad \forall \vert p\vert<m.
		\end{align}
		Let us, we consider the case $\vert p\vert=m$. In this case we obtain
		\begin{align}
			\partial ^{p} \chi(x)=&\int_{\mathbb{B}_{r}}\partial ^{p}\mathbb{J}_{0}(x-y)\psi(y)dy+\mathcal{C}\psi(x), \quad for\quad a.e.\quad x\in \mathbb{B}_{r}
		\end{align} 
		(see [2, p. 235]). Then $\partial ^{p}\mathbb{J}_{0}(x)$ is a singular kernel. Applying theorem 2.5, from (3.5) we get
		\begin{align}
			\Vert \partial ^{p}\chi(x)\Vert_{L_{M}(r)}\leq&\mathcal{C}\Vert \psi\Vert_{L_{M}(r)},\quad \vert p\vert =m.
		\end{align}
		Then the following estimate satisfies
		\begin{align*}
			r^{\vert p\vert}\Vert \partial ^{p}\chi\Vert_{L_{M}(r)}\leq	&\mathcal{C}r^{m} \Vert \psi\Vert_{L_{M}(r)} \leq\mathcal{C}r^{m} (\Vert \psi_{1}\Vert_{L_{M}(r)}+\Vert \psi_{2}\Vert_{L_{M}(r)} )\\
			\leq&\mathcal{C}r^{m}\bigg( \bar{\bar{o}}(1)\sum_{\vert p\vert =m} \Vert \partial ^{p}\varphi\Vert_{L_{M}(r)}+\sum_{\vert p\vert<m} \Vert \partial ^{p}\varphi\Vert_{L_{M}(r)}\bigg)\\
			\leq&\mathcal{C}\bigg(\bar{\bar{o}}(1)\sum_{\vert p\vert =m}r^{m}\Vert \partial ^{p}\varphi\Vert_{L_{M}(r)}+ \sum_{\vert p\vert<m} r^{m-\vert p\vert} r^{\vert p\vert} \Vert \partial ^{p}\varphi\Vert_{L_{M}(r)}\bigg)\\
			\leq&\mathcal{C}\bigg(\bar{\bar{o}}(1)\sum_{\vert p\vert =m}r^{m}\Vert \partial ^{p}\varphi\Vert_{L_{M}(r)}+ r \sum_{\vert p\vert<m}r^{\vert p\vert} \Vert \partial ^{p}\varphi\Vert_{L_{M}(r)}\bigg)\\
			\leq& \mathcal{C}\bar{\bar{o}}(1) \Vert \varphi\Vert _{W^{m}_{G_{M},d_{\Omega}}(r)}.
		\end{align*}
		Considering (3.4) we achieve
		\begin{align*}
			\Vert \chi \Vert_{W^{m}_{G_{M},d_{\Omega}}(r)}= \Vert \mathbb{T} _{0}\varphi\Vert_{W^{m}_{G_{M},d_{\Omega}}(r)}\leq& \sigma(r)\Vert \varphi\Vert_{W^{m}_{G_{M},d_{\Omega}}(r)}, \quad \sigma(r)\to 0, \quad as \quad r\to 0.
		\end{align*}
      The lemma was proved.
	\end{proof}
	\section{Local existence theorem}In this section we establish a local existence theorem for strong solutions to the equation $Lu = \varphi$
	in $W^{m}_{G_{M}.d_{\Omega}}$. We must demonstrate that the operator $L$ acts from $W^{m}_{G_{M},d_{\Omega}}(\Omega)$ to $G_{M}(\Omega)$ for our subsequent actions to be proper. The lemma that follows is correct. 
	\begin{lemma} Let $L_{M}$ be a reflexive Orlicz space on $\Omega$ with Boyd index $\alpha_{M} > 0$ anda $a_{p}(\cdot)\in L_{\infty}(\Omega)$ for all $\vert p\vert\leq m$.  Hence the operator $L$ maps functions in $W^{m}_{G_{M}}(\Omega)$ to functions in $G_{M}(\Omega)$.
	\end{lemma}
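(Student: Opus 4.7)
The goal is to check both defining features of $G_M(\Omega)$ for the function $Lu$: membership in $L_M(\Omega)$ and continuity of its translations in the $L_M$-norm. Membership is the easy half. Since $u \in W^m_{G_M}(\Omega) \subset W^m_M(\Omega)$, each distributional derivative $\partial^p u$ lies in $L_M(\Omega)$, and multiplication by $a_p \in L_\infty(\Omega)$ preserves $L_M$ with the pointwise bound $\|a_p\partial^p u\|_{L_M} \leq \|a_p\|_\infty \|\partial^p u\|_{L_M}$; summing over $|p|\leq m$ yields $Lu \in L_M(\Omega)$.

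For translation continuity, I will decompose each coefficient summand as
\begin{align*}
T_\delta(a_p\partial^p u) - a_p\partial^p u = (T_\delta a_p)\bigl(T_\delta \partial^p u - \partial^p u\bigr) + \bigl(T_\delta a_p - a_p\bigr)\partial^p u.
\end{align*}
The first piece is bounded in $L_M$ by $\|a_p\|_\infty \|T_\delta\partial^p u - \partial^p u\|_{L_M}$, and tends to zero as $\delta \to 0$ by the very definition of $W^m_{G_M}(\Omega)$ (which forces each $\partial^p u$ into $G_M(\Omega)$). The main obstacle is the second piece: because $a_p$ is only in $L_\infty(\Omega)$, translations $T_\delta a_p$ need not converge to $a_p$ in any norm visible to $L_M$, so a direct H\"older estimate on the product fails.

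To overcome the obstacle, I will combine density with a finite-exponent embedding. The hypothesis $\alpha_M > 0$ forces $\|\chi_E\|_{L_M} \to 0$ as $|E|\to 0$, so Lemma 2.4 applies and yields $\overline{C^\infty_0(\Omega)} = G_M(\Omega)$. Given $\varepsilon > 0$, I choose $g \in C^\infty_0(\Omega)$ with $\|\partial^p u - g\|_{L_M} < \varepsilon$ and split
\begin{align*}
\|(T_\delta a_p - a_p)\partial^p u\|_{L_M} \leq 2\|a_p\|_\infty\,\varepsilon + \|(T_\delta a_p - a_p)g\|_{L_M}.
\end{align*}
For the remaining term, pick any finite $q$ with $q > 1/\alpha_M$; Theorem 2.2 provides a continuous embedding $L_q(\Omega) \subset L_M(\Omega)$, hence
\begin{align*}
\|(T_\delta a_p - a_p)g\|_{L_M} \leq \mathcal{C}\,\|g\|_\infty\, \|T_\delta a_p - a_p\|_{L_q(\Omega)}.
\end{align*}
Since $\Omega$ is bounded and $a_p \in L_\infty(\Omega) \subset L_q(\Omega)$ with $q < \infty$, the classical continuity of translations in $L_q$ sends this last norm to zero. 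Letting first $\delta \to 0$ and then $\varepsilon \to 0$, and finally summing over $|p|\leq m$, delivers $\|T_\delta(Lu) - Lu\|_{L_M} \to 0$, so $Lu \in G_M(\Omega)$. The whole role of the hypothesis $\alpha_M > 0$ in the statement is to unlock an embedding $L_q \subset L_M$ with $q < \infty$, which is precisely the mechanism that bypasses the missing translation continuity of $L_\infty$ coefficients.
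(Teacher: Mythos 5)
Your proposal is correct and follows essentially the same route as the paper's own proof: the identical two-term decomposition of $T_\delta(a_p\,\partial^p u)-a_p\,\partial^p u$, approximation of $\partial^p u$ by $g\in C^\infty_0(\Omega)$ via the density lemma (which is Lemma 2.3 in the paper, not 2.4), and the use of $\alpha_M>0$ to embed $L_q\subset L_M$ and reduce the last term to translation continuity of $a_p$ in $L_q$. Your explicit check that $Lu\in L_M(\Omega)$ is merely a point the paper states without detail.
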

\begin{proof}
	Proof that $f\in G_{M}(\Omega)$ and $\varphi\in L_{\infty}(\Omega)$ imply $\varphi f\in G_{M}(\Omega)$ is sufficient. There is no doubt that  $\varphi f\in L_{M}(\Omega)$. Therefore, evidence of the connection is sufficient.
	\begin{align*}
	\Vert	\mathbb{T}_{\delta}(\varphi f)-\varphi f\Vert_{M}\to 0, \quad \delta \to 0.
	\end{align*}
we get
\begin{align*}
	&\Vert \varphi(\cdot+\delta) f(\cdot+\delta)-\varphi(\cdot) f(\cdot)\Vert_{M}
	\leq \Vert \varphi(\cdot+\delta)\big( f(\cdot+\delta)-f(\cdot)\big)\Vert_{M}
	+\Vert \big(\varphi(\cdot+\delta)- \varphi(\cdot)\big) f(\cdot)\Vert_{M}=\Delta^{(1)}_{\delta}+\Delta^{(2)}_{\delta},
\end{align*}
where 
\begin{align*}
	\Delta^{(1)}_{\delta}=\Vert \varphi(\cdot+\delta)\big( f(\cdot+\delta)-f(\cdot)\big)\Vert_{M},\quad
	\Delta^{(2)}_{\delta}=\Vert \big(\varphi(\cdot+\delta)- \varphi(\cdot)\big) f(\cdot)\Vert_{M}.
\end{align*}
For $\Delta^{(1)}_{\delta}$ we obtain
\begin{align*}
	\Delta^{(1)}_{\delta}\leq \Vert \varphi\Vert_{\infty}\Vert f(\cdot+\delta)-f(\cdot)\Vert_{M}\to0,\quad \delta\to 0.
\end{align*}
In addition, let $\varepsilon>0$ be any given number. Since  $\overline{C^{\infty}_{0}(\Omega)}=G_{M}(\Omega)$ (the closure in $L_{M}(\Omega)$),  it is evident that there exists a function $ g\in C^{\infty}_{0}(\Omega)$:
\begin{align*}
	\Vert f-g\Vert_{M}<\varepsilon.
\end{align*}
Consequently, for $\Delta^{(2)}_{\delta}$ we get
\begin{align*}
	\Delta^{(2)}_{\delta}=\Vert \big(\varphi(\cdot+\delta)- \varphi(\cdot)\big) \big( f(\cdot)-g(\cdot)+g(\cdot)\big)\Vert_{M}\leq 2\Vert \varphi \Vert_{\infty} \varepsilon+ \Delta^{(3)}_{\delta},
\end{align*}
where
\begin{align*}
	 \Delta^{(3)}_{\delta}=\Vert \big(\varphi(\cdot+\delta)-\varphi(\cdot)\big)g(\cdot)\Vert_{M}.
\end{align*} 
Since $\alpha_{M} > 0$, there exists $p \in (1,+\infty)$ such that $1/\alpha_{M} <p$ ; thus,there is $\mathcal{C}>0$ for which $\Vert f\Vert_{M} \leq \mathcal{C} \Vert f\Vert_{p}, \quad f \in L_{M}(\Omega)$. Hence,
\begin{align*}
	\Delta^{(3)}_{\delta}\leq \Vert g\Vert_{\infty}\Vert \varphi(\cdot+\delta)-\varphi(\cdot)\Vert_{M}\leq \mathcal{C} \Vert g\Vert_{\infty}\Vert \varphi(\cdot+\delta)-\varphi(\cdot)\Vert_{L_{p}(\Omega)}\to 0, \quad \delta \to 0,
\end{align*}
where $\mathcal{C}>0$ is a constant independent of $\delta$.
\end{proof}

By using the Main Lemma and following (\cite{2}), we can establish the existence of a solution $u$ for the equation $Lu=f$ within the class $W^{m}_{G_{M},d_{\Omega}}(r)$. This theorem guarantees the existence of such a solution for a certain range of parameters and conditions.

 \begin{theorem} Let $L_{M}$ be  Orlicz space on $\Omega$ with Boyd indices $\alpha_{M},\beta_{M} \in (0,1)$. Suppose that $L$ be an mth order elliptic operator which has the property $\mathscr{P}_{x_{0}})$ at some point $x_{0} \in\Omega$ and $f\in G_{M}(\Omega)$. Hence, for enough small r, there is a solution of the equation $Lu=f$ belonging to the class $W^{m}_{G_{M}}(\mathbb{B}{r}(x_{0}))$.
 \end{theorem}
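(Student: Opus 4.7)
The plan is to solve $Lu=f$ locally around $x_0$ by reducing it, via Lemma 2.2, to the linear fixed-point equation $u = \mathbb{T}_0 u + \mathcal{S}_0 f$ and then inverting $I-\mathbb{T}_0$ by a Neumann series, exploiting the contraction estimate of the Main Lemma. Without loss of generality take $x_0=0$ and choose $r>0$ so small that $\mathbb{B}_r \subset \Omega$, the operator $L$ satisfies property $\mathscr{P}_0)$ on $\mathbb{B}_r$, and the constant $\sigma(r)$ from Lemma 3.1 satisfies $\sigma(r)\leq 1/2$.

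First I would check that $v:=\mathcal{S}_0 f$ belongs to $W^m_{G_M,d_\Omega}(\mathbb{B}_r)$. For a multi-index $p$ with $\vert p\vert<m$, the derivative $\partial^p v$ is the convolution of $f$ with the weakly singular kernel $\partial^p \mathbb{J}_0$, which satisfies $\vert \partial^p \mathbb{J}_0(x)\vert \leq \mathcal{C}\,\vert x\vert^{m-n-\vert p\vert}$. Applying Theorem 2.5 (convolution with an $L_1$-kernel) on $\mathbb{B}_r$, together with the elementary identity $\int_{\mathbb{B}_r}\vert x\vert^{m-n-\vert p\vert}\,dx = \mathcal{C}\,r^{m-\vert p\vert}$, yields $\Vert \partial^p v\Vert_{L_M(\mathbb{B}_r)} \leq \mathcal{C}\,r^{m-\vert p\vert}\Vert f\Vert_{L_M}$. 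For $\vert p\vert=m$, the representation (3.5) from the proof of the Main Lemma realises $\partial^p v$ as a Calderón–Zygmund singular integral applied to $f$ plus a constant multiple of $f$; Corollary 2.1 makes this bounded on $L_M$ and Lemma 2.3 guarantees that it preserves the subspace $G_M$. Combining the two cases, $v \in W^m_{G_M,d_\Omega}(\mathbb{B}_r)$ with a quantitative norm bound in terms of $\Vert f\Vert_{L_M}$.

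Next I would invoke the Main Lemma, which tells us that $\mathbb{T}_0$ is a bounded linear operator on $W^m_{G_M,d_\Omega}(\mathbb{B}_r)$ of norm at most $\sigma(r)<1$. Hence $I-\mathbb{T}_0$ is invertible via the convergent Neumann series, and
\[
u := \sum_{k=0}^{\infty}\mathbb{T}_0^{k} v \;\in\; W^m_{G_M,d_\Omega}(\mathbb{B}_r)
\]
is the unique solution of $u = \mathbb{T}_0 u + \mathcal{S}_0 f$ in this space. The converse (second) assertion of Lemma 2.2 then gives $Lu = f$ almost everywhere on $\mathbb{B}_r$, and by Lemma 4.1 both sides lie in $G_M(\mathbb{B}_r)$; since the $d_\Omega$-weighted and unweighted norms on $W^m_{G_M}$ are equivalent on a fixed ball, $u$ is the desired strong solution belonging to $W^m_{G_M}(\mathbb{B}_r)$.

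The main obstacle I expect is the top-order regularity in the first step: ensuring that $\partial^p \mathcal{S}_0 f$ for $\vert p\vert=m$ lies in the separable subspace $G_M$ rather than merely in $L_M$. This is precisely where both Boyd-index hypotheses are used — reflexivity through Theorem 2.3 and boundedness of Calderón–Zygmund operators through Corollary 2.1 — and where Lemma 2.3 on the $G_M$-invariance of singular operators is indispensable. A secondary subtlety is that the first half of Lemma 2.2 requires the test function to have compact support, whereas the Neumann-series iterate $u$ need not; this is circumvented by appealing only to the converse (second) half of the lemma, which requires nothing beyond $u$ satisfying the integral equation.
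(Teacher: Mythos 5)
Your overall strategy is the same as the paper's: reduce $Lu=f$ to the parametrix equation $u=\mathbb{T}_{0}u+\mathcal{S}_{0}f$, use the Main Lemma to make $\Vert \mathbb{T}_{0}\Vert_{[W^{m}_{G_{M},d_{\Omega}}(\mathbb{B}_{r})]}<1$ for small $r$, and invert $I-\mathbb{T}_{0}$ by a Neumann series. Your explicit check that $\mathcal{S}_{0}f\in W^{m}_{G_{M},d_{\Omega}}(\mathbb{B}_{r})$ (weakly singular kernels plus Theorem 2.5 for $\vert p\vert<m$; the representation (3.5) together with Corollary 2.1 and the $G_{M}$-invariance lemma, which is Lemma 2.4, not 2.3, for $\vert p\vert=m$) is a genuine plus, since the paper leaves this membership implicit. (Also note the parametrix lemma is Lemma 2.1, not 2.2.)

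The gap is at the very end, where you invoke the converse half of Lemma 2.1 as a black box and assert that it ``requires nothing beyond $u$ satisfying the integral equation.'' That lemma is quoted from the classical reference, where it is established for classical/$L_{p}$ classes; its converse half applied to $u\in W^{m}_{G_{M}}(\mathbb{B}_{r})$ and $f\in G_{M}$ is exactly the identity $L_{0}\mathcal{S}_{0}=I$ on $G_{M}(\mathbb{B}_{r})$: applying $L_{0}$ to $u=\mathcal{S}_{0}(L_{0}-L)u+\mathcal{S}_{0}f$ gives $Lu=f$ only if $L_{0}\mathcal{S}_{0}w=w$ for $w=(L_{0}-L)u+f$, which lies merely in $G_{M}(\mathbb{B}_{r})$ (Lemma 4.1). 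Proving this identity is the bulk of the paper's proof of the theorem: one takes $\varphi_{k}\in C^{\infty}_{0}(\mathbb{B}_{r})$ with $\varphi_{k}\to w$ in $L_{M}$ (the density Lemma 2.3, which is where the Boyd-index hypotheses enter beyond Calder\'on--Zygmund boundedness), uses $L_{0}\mathcal{S}_{0}\varphi_{k}=\varphi_{k}$ from Theorem 2.1, and passes to the limit using the boundedness of $L_{0}\mathcal{S}_{0}$ on $G_{M}(\mathbb{B}_{r})$ via its singular-integral representation (Corollary 2.1 and Lemma 2.4). Your proposal already contains all the ingredients for this (you use the same representation and the same invariance results when treating $\mathcal{S}_{0}f$), so the repair is short: insert this density-plus-continuity argument to justify $L_{0}\mathcal{S}_{0}=I_{G_{M}}$, instead of citing the classical lemma verbatim for a function class in which it has not been established.
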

\begin{proof}
Let us assume that $x_{0}=0$ without loss of generality. We aim to prove that $L_{0}S_{0}\varphi=\varphi$ for all $\varphi\in G_{M}(r)$.
To clarify, consider an arbitrary function $L_{0}S_{0}\varphi=\varphi$ for all $\varphi\in G_{M}(r)$.

By using lemma 2.3, we obtain 
\begin{align*}
	\exists\lbrace \varphi_{k}\rbrace\subset C^{\infty}_{0}(r): \Vert \varphi_{k}-\varphi\Vert_{L_{M}(r)}\to 0, \quad k\to \infty.
\end{align*}

By using Theorem 2.1, $L_{0}S_{0}\varphi_{k}=\varphi_{k}, \forall k\in N$. Thus it is enough to prove that $L_{0}S_{0}$ acts boundedly in $G_{M}(r)$. We get

\begin{align*}
	L_{0}S_{0}\varphi_{k}=L_{0}\int_{\mathbb{B}_{r}}\mathbb{J}_{0}(x-y)\varphi_{k}(y)dy.
\end{align*}
Considering that $L_{0}$ is a homogeneous differential operator of mth order, by differentiation formula 3.5, for $\vert p\vert =m$ we get 

\begin{align*}
	(L_{0}S_{0}\varphi_{k})(x)=\int_{\mathbb{B}_{r}}L_{0}\mathbb{J}_{0}(x-y)\varphi_{k}(y)dy+const \varphi_{k}(x).
\end{align*}
Since $L_{0}\mathbb{J}_{0}(x)$ is a singular kernel, from  Lemma 2.4, it follows that $L_{0}S_{0}$ is bounded for the functios from $C^{\infty}_{0}(r)$ to $G_{M}(r)$ and $\overline{C^{\infty}_{0}(r)}=G_{M}(r)$ implies its boundedness in $G_{M}(r)$. Consequently, $L_{0}S_{0}=I_{G_{M}}$, where $I_{G_{M}}$ is a unit operator in $G_{M}(r)$. We achieve

\begin{align*}
	L_{0}\mathbb{T}_{0}=L_{0}S_{0}(L_{0}-L)=I_{G_{M}}(L_{0}-L)=L_{0}-L.
\end{align*}
By using above expression, the equation $Lu=f$ is able to be rewritten as follows.
\begin{align*}
	L_{0}u-L_{0}\mathbb{T}_{0}u=f\implies L_{0}(I_{W_{M}}-\mathbb{T}_{0})u=f,
\end{align*}
where $I_{W_{M}}$ is a unit operator in $W_{M}(r)$. Then we have 
\begin{align*}
	(I_{W_{M}}-\mathbb{T}_{0})u=S_{0}f.
\end{align*}
By using Main Lemma, $\big\Vert \mathbb{T}_{0}\big\Vert_{[W^{m}_{G_{M}.d_{\Omega}}]}= \bar{\bar{o}}(1), \quad r\to 0$. Thus, for enough small $r$ we  get $\big\Vert \mathbb{T}_{0}\big\Vert_{[W^{m}_{G_{M}.d_{\Omega}}]}<1$. Hence, operator $(I_{W_{M}}-\mathbb{T}_{0})$ is boundedly invertible in $W_{M}(r)$ and, by lemma 2.1, the function 
	\begin{align*}
		u=(I_{W_{M}}-\mathbb{T}_{0})^{-1} S_{0}f,
	\end{align*}
is a solution of the equation $Lu=f$.
\end{proof}
\section{Conclusion}
In conclusion, this article presents a study on a higher-order elliptic equation with nonsmooth coefficients in Orlicz spaces on a domain in $\mathbb{R}^{n}$. The article identifies a separable subspace within the Orlicz space that contains infinitely differentiable and compactly supported functions, and determines the Sobolev spaces generated by this subspace.

Furthermore, the article demonstrates the local solvability of the equation in Orlicz-Sobolev spaces, subject to specific restrictions on the coefficients of the equation and the Boyd indices of the Orlicz space. This result builds upon the previously established classical $L_{p}$ analog.

Overall, this research enhances our understanding of higher-order elliptic equations with nonsmooth coefficients in Orlicz spaces and provides valuable insights into their solvability in Orlicz-Sobolev spaces. Further investigation can be done to explore the global solvability of these equations and to generalize the results to other types of equations with nonsmooth coefficients.

\end{document}